\numberwithin{equation}{section}
\theoremstyle{plain}
\newtheorem{theorem}{Theorem}[section]
\newtheorem{lemma}[theorem]{Lemma}
\newtheorem{corollary}[theorem]{Corollary}
\newtheorem{theoremcite}{Theorem}
\theoremstyle{definition}
\begin{document}

\title[On existence of hyperinvariant subspaces]
{On existence of hyperinvariant subspaces for quasinilpotent operators with a nonsymmetry in the growth of the resolvent}

\author{Maria F. Gamal'}
\address{
 St. Petersburg Branch\\ V. A. Steklov Institute 
of Mathematics\\
Russian Academy of Sciences\\ Fontanka 27, St. Petersburg\\ 
191023, Russia  
}
\email{gamal@pdmi.ras.ru}

\keywords{Hyperinvariant subspaces, growth of resolvent, quasinilpotent operator.}

%\date\today

\begin{abstract} Let $T$ be a quasinilpotent operator on a Banach space. Under assumptions of 
 a certain nonsymmetry in the growth of  the resolvent of $T$, 
it is proved that every operator in the commutant of $T$ is not unicellular. 
In particular, $T$ has nontrivial hyperinvariant subspaces.
 The proof is based on a modification of the reasoning of \cite{sol}.  

2020 \emph{Mathematics Subject Classification}. 47A10, 47A15, 47B01.

 \end{abstract}

\maketitle

\section{Introduction}
%Notation
 Let $\mathcal H$ be a (complex, separable) Banach space, 
and let  $\mathcal L(\mathcal H)$ be the algebra of all (bounded, linear)  operators acting on  $\mathcal H$. A (closed) subspace  $\mathcal M$ of  $\mathcal H$ is called \emph{invariant} 
for an operator $T\in\mathcal L(\mathcal H)$, if $T\mathcal M\subset\mathcal M$. 
 Denote by $\operatorname{Lat}T$ the collection of all invariant subspaces of $T$. It is well known and easy to see that $\operatorname{Lat}T$ is a complete lattice with 
the inclusion as partial ordering. An operator $T$ is called \emph{unicellular}, if $\operatorname{Lat}T$
is totally ordered by inclusion. 
 Set
$\{T\}'=\{A\in\mathcal L(\mathcal H)\ :\ AT=TA\}$ and $\{T\}''=\{A\in\mathcal L(\mathcal H)\ :\ AB=BA$ for every $B\in\{T\}'\}$.  It is well known and easy to see that $\{T\}'$ and $\{T\}''$ are algebras closed in weak operator topology. 
 A  subspace  $\mathcal M$ of  $\mathcal H$ is called \emph{hyperinvariant} 
for $T$,  if  $A\mathcal M\subset\mathcal M$ for every $A\in\{T\}'$. Denote by $\operatorname{Hlat}T$ the collection of all hyperinvariant subspaces of $T$. 
Then $\operatorname{Hlat}T$ is a complete lattice with 
the inclusion as partial ordering, $\operatorname{Hlat}T\subset\operatorname{Lat}T$, and $\ker A$, 
$\operatorname{clos}A\mathcal H\in\operatorname{Hlat}T$ for every $A\in\{T\}''$.

Hyperinvariant subspace problem is a question where every (bounded, linear)  operator acting on  a Hilbert space 
$\mathcal H$ and not equal to a scalar multiple of the identity operator $I$ has a nontrivial (that is, not equal to $\{0\}$ or to $\mathcal H$) hyperinvariant subspace.
% If the space $\mathcal  H$ is not be assumed to be a Hilbert space, then 
On some Banach spaces there exist operators without nontrivial invariant subspaces,  
%the answer on more general question is negative,
 see \cite{enflo}, \cite{read1}, \cite{read2}.

 It is well known that if $\sigma(T)$  (the spectrum of an operator $T$) is not connected, 
then nontrivial  hyperinvariant subspaces of  $T$ can be found by using 
the Riesz--Dunford functional calculus, see, for example, {\cite[Theorem 2.10]{rara}}. 
A similar method can be applied, if an operator $T$ has sufficiently rich spectrum and 
appropriate estimate of the norm of resolvent, see, for example,
 {\cite[Secs. 6.2 and 6.5]{rara}}, \cite{apostol}, \cite{berc}, {\cite[Sec. 4.1]{chalpart}}. 
In this paper, the case when  $\sigma(T)$ is a single point is considered. 
Under assumptions of  a certain nonsymmetry in the growth of  the resolvent of $T$, 
it is proved that   $\operatorname{Hlat}T$
is not totally ordered by inclusion. The proof is based on a modification of the reasoning of \cite{sol}. 
More precisely, a some case when the assumption of  \cite{sol} is not fulfilled for $T$ is considered (Theorem \ref{thmbeta}), and it is shown that 
$T$ satisfies to the assumptions of Lemma \ref{leminv} below.

An operator $T\in\mathcal L(\mathcal H)$ is called \emph{quasinilpotent}, if  $\sigma(T)=\{0\}$.
It is well known and  follows 
%straightforward
 directly from  the definition that 
 $\operatorname{Hlat}T=\operatorname{Hlat}(T-\lambda I)$ for every $T\in\mathcal L(\mathcal H)$ and every $\lambda\in\mathbb C$. 
Therefore, if $\sigma(T)=\{\lambda\}$, then one can consider a quasinilpotent operator $T-\lambda I$ instead of $T$. 
It is known (\cite{read3}) that on some Banach spaces there exist quasinilpotent operators without nontrivial invariant subspaces.  Since the formulation and the  proof of Theorem \ref{thmbeta} in the case of 
a Banach space $\mathcal H$ is the same as in the case of a Hilbert space,  Theorem \ref{thmbeta} is formulated and proved 
for a Banach space  $\mathcal H$.

% $\sigma(T-\lambda I)=\{0\}$.  
%the following lemma.
%operators which not satisfying of assumption of  \cite{sol} are considered, and existence of nontrivial hyperinvariant subspaces in some cases is proved.
 
 The following lemma is well known. We give a  proof to emphasize some details.

\begin{lemma}\label{leminv} Let $T\in\mathcal L(\mathcal H)$. Suppose that there exist  
 $A_1$, $A_2\in\{T\}''$ such that $A_1A_2=A_2A_1=\mathbb O$ and $A_k^2\neq\mathbb O$, $k=1,2$. 
Then $\operatorname{Hlat}T$ is not totally ordered by inclusion. In particular, every operator from $\{T\}'$ is not unicellular. 
\end{lemma}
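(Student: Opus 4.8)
The plan is to produce two explicit incomparable elements of $\operatorname{Hlat}T$, manufactured from the ranges of $A_1$ and $A_2$; the only structural input used is the observation recorded in the introduction that $\ker A$ and $\operatorname{clos}A\mathcal H$ lie in $\operatorname{Hlat}T$ for every $A\in\{T\}''$. Accordingly, set $\mathcal M_k=\operatorname{clos}A_k\mathcal H$ and $\mathcal N_k=\ker A_k$ for $k=1,2$, so that $\mathcal M_1,\mathcal M_2,\mathcal N_1,\mathcal N_2\in\operatorname{Hlat}T$.

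First I would extract the containments forced by the relations $A_1A_2=A_2A_1=\mathbb O$: from $A_1A_2=\mathbb O$ one has $A_2\mathcal H\subset\ker A_1$, and since $\ker A_1$ is closed this gives $\mathcal M_2\subset\mathcal N_1$; symmetrically $\mathcal M_1\subset\mathcal N_2$. Next I would turn the conditions $A_k^2\neq\mathbb O$ into non-containments: were $\mathcal M_1\subset\mathcal N_1$, i.e.\ $\operatorname{clos}A_1\mathcal H\subset\ker A_1$, then in particular $A_1\mathcal H\subset\ker A_1$ and hence $A_1^2=\mathbb O$, a contradiction; thus $\mathcal M_1\not\subset\mathcal N_1$, and likewise $\mathcal M_2\not\subset\mathcal N_2$.

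Combining these, $\mathcal M_1$ and $\mathcal M_2$ are incomparable: if $\mathcal M_1\subset\mathcal M_2$ then $\mathcal M_1\subset\mathcal M_2\subset\mathcal N_1$, contradicting $\mathcal M_1\not\subset\mathcal N_1$, and the inclusion $\mathcal M_2\subset\mathcal M_1$ is excluded in the same way through $\mathcal M_1\subset\mathcal N_2$. Hence $\operatorname{Hlat}T$ has two incomparable elements and is not totally ordered (the pair $\mathcal N_1,\mathcal N_2$ would serve equally well). For the last assertion, every $\mathcal M\in\operatorname{Hlat}T$ is by definition invariant under each $A\in\{T\}'$, so $\operatorname{Hlat}T\subset\operatorname{Lat}A$; as $\operatorname{Hlat}T$ is not totally ordered, neither is $\operatorname{Lat}A$, that is, $A$ is not unicellular.

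I do not anticipate a real obstacle: the argument is lattice-theoretic once one has the fact about $\{T\}''$. The single point needing attention --- and presumably the ``detail'' the authors wish to emphasize --- is the passage from $A_k\mathcal H$ to its closure, which is legitimate precisely because kernels are closed; this is what makes the closed ranges (not merely the raw images) the right hyperinvariant subspaces to compare.
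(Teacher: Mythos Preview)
Your proof is correct and follows essentially the same approach as the paper: define $\mathcal M_k=\operatorname{clos}A_k\mathcal H$ and $\mathcal N_k=\ker A_k$, derive $\mathcal M_1\subset\mathcal N_2$ and $\mathcal M_2\subset\mathcal N_1$ from $A_1A_2=A_2A_1=\mathbb O$, derive $\mathcal M_k\not\subset\mathcal N_k$ from $A_k^2\neq\mathbb O$, and combine these to exhibit incomparable hyperinvariant subspaces. The paper additionally records the nontriviality of all four subspaces and writes out the incomparability of the pair $\mathcal N_1,\mathcal N_2$ explicitly, but these are cosmetic differences, not a different argument.
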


\begin{proof} Set $\mathcal N_k=\ker A_k$ and $\mathcal M_k=\operatorname{clos}A_k\mathcal H$, $k=1,2$. 
Then $\mathcal M_k$, $\mathcal N_k\in\operatorname{Hlat}T$, because  $A_k\in\{T\}''$. Furthermore, 
 $\mathcal M_k\not\subset\mathcal N_k$, because $A_k^2\neq\mathbb O$, $k=1,2$.  Consequently,
 $\mathcal N_k\neq\mathcal H$ and $\mathcal M_k\neq\{0\}$, $k=1,2$.  
The equalities   $A_1A_2=\mathbb O$ and $A_2A_1=\mathbb O$ imply that  $\mathcal M_2\subset\mathcal N_1$
and $\mathcal M_1\subset\mathcal N_2$, respectively. Consequently,  $\mathcal N_k\neq\{0\}$ and  
$\mathcal M_k\neq\mathcal H$, $k=1,2$. 

To show that $\operatorname{Hlat}T$ is not totally ordered by inclusion we prove that $\mathcal M_k\not\subset\mathcal M_l$ and $\mathcal N_k\not\subset\mathcal N_l$, if $k\neq l$, $k,l=1,2$. 
Indeed, assume that $\mathcal M_1\subset\mathcal M_2$. Then $\mathcal M_1\subset \mathcal N_1$, a contradiction. 
Thus,  $\mathcal M_1\not\subset\mathcal M_2$. Similarly,  $\mathcal M_2\not\subset\mathcal M_1$. 
Assume that $\mathcal N_1\subset\mathcal N_2$. Then $\mathcal M_2\subset \mathcal N_2$, a contradiction.  
Thus,  $\mathcal N_1\not\subset\mathcal N_2$. Similarly,  $\mathcal N_2\not\subset\mathcal N_1$. 
\end{proof} 

The paper is organized as follows. In Sec. 2, auxiliary results are collected; most of them are modifications of results for operators with reach spectrum. In Sec. 3 Theorem \ref{thmbeta} is proved. In the end of Sec. 3, the comparison with a   result from \cite{sol} is given.

\section{Preliminaries}

The following definition and facts from {\cite[Ch. 10]{duren}} and {\cite[Ch. 7]{pom92}} will be used. 
 Let $\Gamma$ be a rectifiable Jordan curve, denote by $\Omega$ the bounded component of $\mathbb C\setminus \Gamma$; then $\Omega$ is a Jordan domain with boundary $\Gamma$. Denote  by $s$ the arc length measure on $\Gamma$. By $H^\infty(\Omega)$ the algebra of all bounded  analytic functions in $\Omega$ 
with the norm $\|f\|_{H^\infty(\Omega)}=\sup_{z\in\Omega}|f(z)|$
% for $f\in H^\infty(\Omega)$  
is denoted.  
By $E^1(\Omega)$  the Smirnov class  in $\Omega$ is denoted, see {\cite[Sec. 10.1]{duren}} for exact definition.  
Here we recall that if $f\in E^1(\Omega)$, then $f$ is a function  analytic in $\Omega$ and  $f$ has nontangential boundary values $f(z)$ for almost all 
$z\in\Gamma$ with respect to $s$.
 The function $z\mapsto f(z)$ for $z\in\Gamma$ will be denoted 
by the same letter $f$. If $f\in E^1(\Omega)$, then 
 $f\in L^1(\Gamma,s)$.
%for $s$-a.e. $z\in\Gamma$. 
The inclusion $H^\infty(\Omega)\subset E^1(\Omega)$ is fulfilled. Furthermore, if $f\in E^1(\Omega)$, then
\begin{align*} f(w)=\frac{1}{2\pi\mathrm{i}}\int_\Gamma\frac{f(z)}{z-w}\mathrm{d}z \text{ for }w\in\Omega, 
\ \  
 \int_\Gamma\frac{f(z)}{z-w}\mathrm{d}z=0 \text{ for  }w\in\mathbb C\setminus\operatorname{clos}\Omega, 
\\ \text{ and } 
\int_\Gamma f(z)z^n\mathrm{d}z=0 \text{ for all } n\in\mathbb N\cup\{0\}.
\end{align*}

A Jordan  domain $\Omega$ with a rectifiable boundary $\Gamma$ is called a  \emph{Smirnov domain}, if the derivative of conformal mapping of the unit disc 
$\mathbb D$ onto $\Omega$ is an outer function. 
A  rectifiable Jordan curve $\Gamma$ is called a  \emph{Lavrentiev curve}  or \emph{chord-arc-curve}, if  there exists a constant $C$ such that 
$s(\gamma(z,w))\leq C|z-w|$ for every $z$, $w\in\Gamma$, where $\gamma(z,w)$ is the shorter arc of $\Gamma$ betweeen $z$ and $w$. 
If $\Gamma$ is a Lavrentiev curve, then $\Omega$ is a Smirnov domain by {\cite[Theorem 7.11]{pom92}}.

Let $\Omega$ be a Smirnov domain, let $f$ be a function analytic in $\Omega$, and let $f$ have nontangential boundary values $f(z)$ for almost all 
$z\in\Gamma$ with respect to $s$. Denote the function $z\mapsto f(z)$ for $z\in\Gamma$ by the same letter $f$. 
 If $f\in L^\infty(\Gamma,s)$ and $\int_\Gamma f(z)z^n\mathrm{d}z=0$ for all  $n\in\mathbb N\cup\{0\}$,  then $f\in H^\infty(\Omega)$ and 
\begin{equation*} \|f\|_{H^\infty(\Omega)}=\|f\|_{ L^\infty(\Gamma,s)}=
\mathop{\mathrm{ess\, sup}}_{z\in\Gamma}|f(z)| 
\end{equation*}
(where $\mathop{\mathrm{ess\, sup}}$ is taken with respect to $s$).
For  proof, see {\cite[Ch. 10.1--10.3]{duren}}.

For $\alpha\in(0,\pi/2)$ and two sequences $\{a_n\}_{n=1}^\infty$ and  $\{\delta_n\}_{n=1}^\infty$ such that
\begin{align*} 0<\ldots<a_{n+1}<a_n<\ldots<a_2<a_1=1, \ \ a_n\to 0, \\  
0< \delta_n<\min(\delta_{n-1}, a_{n+1}\sin\alpha) \text{  for } n\geq 2,\  \delta_1<a_2\sin\alpha, \end{align*} 
set
\begin{equation}\label{defgg}\mathcal G=\mathcal G(\alpha, \{a_n\}_{n=1}^\infty,\{\delta_n\}_{n=1}^\infty)=
\cup_{n=1}^\infty\{x+\mathrm{i}y\ : a_{n+1}\leq x\leq a_n, |y|\leq\delta_n\}.
\end{equation}
It follows from the definition of $\mathcal G$ that $\mathcal G\cup\{0\}$ is closed, and  
$\mathcal G\subset\{r\mathrm{e}^{\mathrm{i}t}\ :\ r>0, |t|<\alpha\}$.

\begin{lemma}\label{lemgg} Let $\alpha\in(0,\pi/2)$, and let 
 $\varphi\colon(0,1]\to(0,\infty)$ be a nonconstant continuous nonincreasing function.
Let $T\in\mathcal L(\mathcal H)$. Suppose that $(0,1]\subset\mathbb C\setminus\sigma(T)$ and there exists a constant  $C_0>0$  such that 
\begin{equation}\label{estgg} \|(zI-T)^{-1} \|\leq C_0\varphi(|z|) \text{ for every }z\in(0,1].
\end{equation}
Then  for every $C_1>C_0$ there exists 
a set $\mathcal G\subset \mathbb C\setminus\sigma(T)$ such that $\mathcal G$ has a form 
 \eqref{defgg} with $\alpha$ 
(and with appropriate sequences $\{a_n\}_{n=1}^\infty$ and  $\{\delta_n\}_{n=1}^\infty$) and 
\begin{equation}\label{estgg1} \|(zI-T)^{-1} \|\leq C_1\varphi(|z|)\text{ for every }z\in\mathcal G.
\end{equation}
\end{lemma}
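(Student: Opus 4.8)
The plan is to use a continuity/perturbation argument based on the fact that the resolvent is continuous (indeed analytic) on $\mathbb{C} \setminus \sigma(T)$, and to build $\mathcal{G}$ one horizontal slab at a time, working along the interval $(0,1]$ from right to left. First, I would observe that since $\varphi$ is continuous and nonincreasing on $(0,1]$, and since $z \mapsto \|(zI-T)^{-1}\|$ is continuous on a neighborhood of each point $x \in (0,1]$ (the interval lies in the open resolvent set), for each $x \in (0,1]$ there is a radius $\rho(x) > 0$ such that $\|(zI-T)^{-1}\| < C_1 \varphi(x)$ for all $z$ with $|z-x| < \rho(x)$; here I use the strict inequality $C_1 > C_0$ to absorb the small oscillation of both the resolvent and of $\varphi$ near $x$. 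Intersecting such a disc around $x$ with a thin vertical slab $\{a_{n+1} \le \operatorname{Re} z \le a_n\}$ and using $\varphi(x) \le \varphi(\operatorname{Re} z)$ for $\operatorname{Re} z \le x$ (monotonicity) gives the estimate \eqref{estgg1} on that slab, provided the slab is narrow enough both in the $x$-direction and in the $y$-direction.

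Concretely, I would construct the sequences recursively. Set $a_1 = 1$. Suppose $a_1 > \dots > a_n$ and $\delta_1 > \dots > \delta_{n-1}$ have been chosen so that \eqref{estgg1} holds on the slabs already built. To choose $a_{n+1}$ and $\delta_n$: by uniform continuity of the resolvent on the compact interval $[a_{n+1}^{(0)}, a_n]$ for any fixed $a_{n+1}^{(0)} \in (0, a_n)$ — more carefully, by a compactness/covering argument on $[\varepsilon, a_n]$ combined with letting $\varepsilon \to 0$, or simply by picking first a provisional smaller point and then a uniform tube-radius around the segment $[a_{n+1}, a_n]$ — one gets a $\delta > 0$ such that $\|(zI-T)^{-1}\| \le C_1 \varphi(\operatorname{Re} z)$ whenever $a_{n+1} \le \operatorname{Re} z \le a_n$ and $|\operatorname{Im} z| \le \delta$. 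Then I take $\delta_n = \min(\delta,\ \delta_{n-1}/2,\ a_{n+1}\sin\alpha \cdot \tfrac12)$ to meet the structural constraints in the definition of $\mathcal{G}$, and I ensure $a_n \to 0$ by also imposing $a_{n+1} \le a_n/2$ at each step. To get the key inequality on the slab I combine two facts: on the segment itself (imaginary part zero) the hypothesis \eqref{estgg} gives $\|(zI-T)^{-1}\| \le C_0\varphi(|z|)$, and for $z$ with small imaginary part the resolvent is within $(C_1 - C_0)\varphi(\operatorname{Re} z)$ of its value at $\operatorname{Re} z$ once $\delta$ is small, since $\varphi$ is bounded below by $\varphi(a_n) > 0$ on the slab so "small" can be made uniform there.

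The main obstacle, and the step I would be most careful with, is making the choice of $\delta_n$ genuinely uniform over the whole slab $a_{n+1} \le \operatorname{Re} z \le a_n$ rather than merely pointwise in $x$: the resolvent norm and $\varphi$ both vary over the slab, and $\varphi$ may tend to $\infty$ only in the limit $x \to 0$, but on each individual slab $\operatorname{Re} z$ stays bounded away from $0$, so $\varphi$ is bounded on it and the resolvent is uniformly continuous on the compact set $[a_{n+1}, a_n] \times [-\delta_n^{(0)}, \delta_n^{(0)}]$. Thus a standard Lebesgue-number / uniform-continuity argument on that compact set delivers the required $\delta_n$, and there is no difficulty provided one has already fixed $a_{n+1}$ before choosing $\delta_n$ — so the recursion order is: choose $a_{n+1}$ (small enough that $a_{n+1} < a_n$, $a_{n+1} \le a_n/2$, and the pointwise strict estimate $\|(xI-T)^{-1}\| < C_1\varphi(x)$ holds on $[a_{n+1},a_n]$, which it does since it already holds at $a_n$ and both sides are continuous, shrinking if necessary), then choose $\delta_n$ by uniform continuity and the structural constraints. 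Finally, the set $\mathcal{G}$ assembled from these slabs lies in $\mathbb{C}\setminus\sigma(T)$ (the resolvent is defined and bounded at every point of $\mathcal{G}$ by \eqref{estgg1}), has the form \eqref{defgg}, and satisfies \eqref{estgg1}, using $\varphi(\operatorname{Re} z) \le \varphi(|z|)$ — wait, I need $\varphi(|z|) \ge \varphi(\operatorname{Re} z)$ is false; rather since $|z| \ge \operatorname{Re} z$ and $\varphi$ is nonincreasing, $\varphi(|z|) \le \varphi(\operatorname{Re} z)$, so I should phrase \eqref{estgg1} directly in terms of $\varphi(|z|)$: I arrange the slab estimate as $\|(zI-T)^{-1}\| \le C_1 \varphi(|z|)$ by choosing $\delta_n$ small enough that $|z|$ is close to $\operatorname{Re} z$ — again fine on each compact slab — and then the monotonicity of $\varphi$ together with continuity absorbs the discrepancy into the gap $C_1 - C_0$, completing the construction.
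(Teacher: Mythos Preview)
Your argument is correct and follows essentially the same route as the paper's: continuity of the resolvent plus the strict gap $C_1>C_0$ gives a local estimate around each $z\in(0,1]$, and compactness of each $[a_{n+1},a_n]$ converts this into a uniform slab width $\delta_n$ (the paper fixes any strictly decreasing $\{a_n\}\to 0$ in advance and uses a finite subcover by discs, whereas you build $a_{n+1}$ and $\delta_n$ recursively via uniform continuity, but this is cosmetic). The one place where the paper is cleaner is exactly the point you backtrack on at the end: instead of first bounding by $C_1\varphi(\operatorname{Re} z)$ and then repairing the monotonicity direction, the paper notes that $|w|\le |z|+\delta(z)$ on the disc $|w-z|\le\delta(z)$ and uses continuity of $\varphi$ to arrange $C_0\varphi(|z|)+\varepsilon(z)\le C_1\varphi(|z|+\delta(z))\le C_1\varphi(|w|)$ in a single step, so the bound in terms of $\varphi(|w|)$ is built into the local discs from the start.
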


\begin{proof} Let $z\in(0,1]$. Let $\varepsilon(z)>0$ be such that $\varepsilon(z)+C_0\varphi(|z|)<C_1\varphi(|z|)$. 
 There exists $\delta_0(z)>0$ such that 
$\{w\ :\ |w-z|<\delta_0(z)\}\subset \mathbb C\setminus\sigma(T)$ 
and \begin{align*}\|(wI-T)^{-1}\|\leq \varepsilon(z) +  \|(zI-T)^{-1} \|\leq \varepsilon(z) + C_0\varphi(|z|) \\
\text{ for every }w \text{ such that }|w-z|<\delta_0(z).
\end{align*}
Since $\varphi$ is continuous, there exists $\delta(z)$ such that $0<\delta(z)<\delta_0(z)$ and 
$\varepsilon(z) + C_0\varphi(|z|)\leq C_1\varphi(|z|+\delta(z))$.
Let $w$ be such that  $|z-w|\leq\delta(z)$. Then $|w|\leq |z|+\delta(z)$. Since $\varphi$ is nonincreasing, 
\begin{equation}\label{deltazw}
\begin{aligned}
\|(wI-T)^{-1}\|\leq C_1\varphi(|z|+\delta(z))\leq C_1\varphi(|w|)\\ \text{ for every }w \text{ such that }|w-z|<\delta(z).
\end{aligned}\end{equation}

Take a sequence $\{a_n\}_{n=1}^\infty$  such that
$0<\ldots<a_{n+1}<a_n<\ldots<a_2<a_1=1$, and $a_n\to 0$. Let $n\geq 1$. 
Then \begin{equation*}
[a_{n+1}, a_n]\subset\cup_{ z\in[a_{n+1}, a_n]}\{w\ :\ |z-w|<\delta(z)\},
\end{equation*}
where $\delta(z)$ is defined as above for every $z\in(0,1]$. Since $[a_{n+1}, a_n]$ is a compact set, 
there exists a finite subset $\{z_{nk}, k=1,\ldots, N_n\}\subset [a_{n+1}, a_n]$ such that 
\begin{equation*}
[a_{n+1}, a_n]\subset\cup_{k=1}^{N_n}\{w\ :\ |z_{nk}-w|<\delta(z_{nk})\}.
\end{equation*}
Set \begin{equation*} \delta_{1n}=\operatorname{dist}\Bigl([a_{n+1}, a_n], \ 
\partial\bigl(\cup_{k=1}^{N_n}\{w\ : \ |z_{nk}-w|<\delta(z_{nk})\}\bigr)\Bigr).
\end{equation*}
Then $\delta_{1n}>0$. 
Construct a sequence  $\{\delta_n\}_{n=1}^\infty$ satisfying the properties before \eqref{defgg} by induction. 
Take $\delta_1$ such that  $0\!<\!\delta_1\!<\!\min(\delta_{11}, a_2\sin\alpha)$. Assume that $\delta_{n-1}$ is constructed. 
Take $\delta_n$ such that  $0\!<\!\delta_n\!<\!\min(\delta_{1n},\delta_{n-1}, a_{n+1}\sin\alpha)$. 
Then \eqref{defgg} is fulfilled for $\mathcal G$ constructed by the sequences  $\{a_n\}_{n=1}^\infty$ and  $\{\delta_n\}_{n=1}^\infty$ 
due to \eqref{deltazw}.
\end{proof}

\begin{lemma}\label{lem0} Let $\Gamma$ be a rectifiable Jordan curve. Denote by $\Omega$ the bounded components of $\mathbb C\setminus \Gamma$ 
and by $s$ the arc length measure on $\Gamma$. 
Let $f\in E^1(\Omega)$, and let $T\in\mathcal L(\mathcal H)$. 
Suppose that 
\begin{equation}\label{est0} s(\sigma(T)\cap\Gamma)=0, \text{ and } 
\mathop{\mathrm{ess\, sup}}_{z\in\Gamma}|f(z)|\|(zI-T)^{-1} \|<\infty 
\end{equation}
(where $\mathop\mathrm{{ess\, sup}}$ is taken with respect to $s$).
Set
\begin{equation*} A=\frac{1}{2\pi\mathrm{i}}\int_{\Gamma}f(z)(zI-T)^{-1}\mathrm{d}z.
\end{equation*}
Then $A\in\{T\}''$ and 
\begin{equation}\label{ttnaa} T^nA=\frac{1}{2\pi\mathrm{i}}\int_{\Gamma}z^nf(z)(zI-T)^{-1}\mathrm{d}z \ \text{ for every } n\in\mathbb N.
\end{equation}
\end{lemma}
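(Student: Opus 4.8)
The plan is first to check that the operator-valued Bochner integrals involved are well defined, then to verify the commutation property, and finally to establish \eqref{ttnaa} by induction on $n$.

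First I would observe that, by \eqref{est0}, $\sigma(T)\cap\Gamma$ is a closed $s$-null subset of $\Gamma$, so the resolvent $z\mapsto(zI-T)^{-1}$ is defined on the relatively open, full-$s$-measure set $\Gamma\setminus\sigma(T)$, where it is continuous (being the restriction of the analytic, hence continuous, $\mathcal L(\mathcal H)$-valued function $(zI-T)^{-1}$ on the open set $\mathbb C\setminus\sigma(T)$); in particular it is strongly measurable there, with separable range. Multiplying by the $s$-measurable boundary function $f$, the integrand $z\mapsto f(z)(zI-T)^{-1}$ is strongly measurable on $\Gamma$, and it is essentially bounded by the second part of \eqref{est0}. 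Since $\Gamma$ is rectifiable, $s(\Gamma)<\infty$, so this integrand is Bochner integrable and $A\in\mathcal L(\mathcal H)$ is well defined, with $\|A\|\le\frac{1}{2\pi}s(\Gamma)\mathop{\mathrm{ess\, sup}}_{z\in\Gamma}|f(z)|\|(zI-T)^{-1}\|$. The same reasoning, now using that $z^n$ is bounded on the compact set $\Gamma$, shows that $z\mapsto z^nf(z)(zI-T)^{-1}$ is Bochner integrable for every $n\in\mathbb N$, so the right-hand side of \eqref{ttnaa} is a well-defined element of $\mathcal L(\mathcal H)$.

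Next, to show $A\in\{T\}''$, let $B\in\{T\}'$. From $B(zI-T)=(zI-T)B$ we get $(zI-T)^{-1}B=B(zI-T)^{-1}$ for every $z\notin\sigma(T)$, hence for $s$-a.e.\ $z\in\Gamma$. Since left multiplication and right multiplication by a fixed bounded operator $B$ are bounded linear maps on $\mathcal L(\mathcal H)$, they commute with the Bochner integral, and therefore
\begin{equation*}
AB=\frac{1}{2\pi\mathrm{i}}\int_\Gamma f(z)(zI-T)^{-1}B\,\mathrm{d}z=\frac{1}{2\pi\mathrm{i}}\int_\Gamma f(z)B(zI-T)^{-1}\,\mathrm{d}z=BA.
\end{equation*}
As $B\in\{T\}'$ was arbitrary, $A\in\{T\}''$.

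Finally, for \eqref{ttnaa} I would use the identity $T(zI-T)^{-1}=z(zI-T)^{-1}-I$, valid for $z\notin\sigma(T)$. Since $T$ is bounded it may be pulled inside the Bochner integral, so
\begin{equation*}
TA=\frac{1}{2\pi\mathrm{i}}\int_\Gamma f(z)\,T(zI-T)^{-1}\,\mathrm{d}z=\frac{1}{2\pi\mathrm{i}}\int_\Gamma zf(z)(zI-T)^{-1}\,\mathrm{d}z-\frac{1}{2\pi\mathrm{i}}\Bigl(\int_\Gamma f(z)\,\mathrm{d}z\Bigr)I,
\end{equation*}
and the last integral vanishes because $f\in E^1(\Omega)$ satisfies $\int_\Gamma z^nf(z)\,\mathrm{d}z=0$ for all $n\in\mathbb N\cup\{0\}$ (the case $n=0$); this is \eqref{ttnaa} for $n=1$. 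Assuming \eqref{ttnaa} for some $n$, apply $T$ on the left, move it inside the integral (the scalar factor $z^nf(z)$ being unaffected), use the same resolvent identity, and invoke $\int_\Gamma z^nf(z)\,\mathrm{d}z=0$ once more to obtain \eqref{ttnaa} for $n+1$. The only point that requires any care here is the strong measurability and Bochner integrability of the operator-valued integrands on $\Gamma$; once that is in place the remainder is a routine manipulation of the resolvent identity together with the moment conditions satisfied by functions in $E^1(\Omega)$.
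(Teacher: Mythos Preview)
Your proof is correct and follows essentially the same route as the paper: well-definedness from \eqref{est0}, the commutant statement from the fact that each resolvent lies in $\{T\}''$, and \eqref{ttnaa} by induction via the identity $T(zI-T)^{-1}=z(zI-T)^{-1}-I$ together with the moment conditions $\int_\Gamma z^n f(z)\,\mathrm{d}z=0$ for $f\in E^1(\Omega)$. You supply more detail on Bochner integrability than the paper does, but the argument is otherwise the same.
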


\begin{proof} The inclusion $A\in\mathcal L(\mathcal H)$ follows from \eqref{est0}, and $A\in\{T\}''$, because 
$f(z)(zI-T)^{-1}\in\{T\}''$ for $s$-a.e. $z\in\Gamma$. Prove \eqref{ttnaa} by induction. 
For $n=1$ we have 
\begin{align*} TA&=\frac{1}{2\pi\mathrm{i}}\int_{\Gamma}f(z)T(zI-T)^{-1}\mathrm{d}z=
\frac{1}{2\pi\mathrm{i}}\int_{\Gamma}f(z)(T-zI+zI)(zI-T)^{-1}\mathrm{d}z\\&=
\Bigl(-\frac{1}{2\pi\mathrm{i}}\int_{\Gamma}f(z)\mathrm{d}z\Bigr)I+
\frac{1}{2\pi\mathrm{i}}\int_{\Gamma}f(z)z(zI-T)^{-1}\mathrm{d}z\\&=
\frac{1}{2\pi\mathrm{i}}\int_{\Gamma}f(z)z(zI-T)^{-1}\mathrm{d}z,
\end{align*}
because $\int_{\Gamma}f(z)\mathrm{d}z=0$. 
Suppose that \eqref{ttnaa} is proved for $n\in\mathbb N$. 
We have 
\begin{align*} T^{n+1}A&=\frac{1}{2\pi\mathrm{i}}\int_{\Gamma}z^nf(z)T(zI-T)^{-1}\mathrm{d}z\\&=
\frac{1}{2\pi\mathrm{i}}\int_{\Gamma}z^nf(z)(T-zI+zI)(zI-T)^{-1}\mathrm{d}z\\&=
\Bigl(-\frac{1}{2\pi\mathrm{i}}\int_{\Gamma}z^nf(z)\mathrm{d}z\Bigr)I+
\frac{1}{2\pi\mathrm{i}}\int_{\Gamma}f(z)z^{n+1}(zI-T)^{-1}\mathrm{d}z\\&=
\frac{1}{2\pi\mathrm{i}}\int_{\Gamma}f(z)z^{n+1}(zI-T)^{-1}\mathrm{d}z,
\end{align*}
because $\int_{\Gamma}z^nf(z)\mathrm{d}z=0$.
\end{proof} 

 \begin{lemma}\label{lemkeraa} Let $\Gamma$ be a rectifiable Jordan curve. 
 Denote by $\Omega$ the bounded components of $\mathbb C\setminus \Gamma$ 
and by $s$ the arc length measure on $\Gamma$. Suppose that $\Omega$ is a Smirnov domain. 
Let $f\in E^1(\Omega)$, and let $T\in\mathcal L(\mathcal H)$. 
Suppose that $\sigma(T)\cap\Omega=\emptyset$, $s(\sigma(T)\cap\Gamma))=0$,  and 
\begin{equation}\label{estkeraa}
\mathop\mathrm{{ess\, sup}}_{z\in\Gamma}|f(z)|\|(zI-T)^{-1} \|<\infty
\end{equation}
(where $\mathop\mathrm{{ess\, sup}}$ is taken with respect to $s$).
Set
\begin{equation*} A=\frac{1}{2\pi\mathrm{i}}\int_{\Gamma}f(z)(zI-T)^{-1}\mathrm{d}z.
\end{equation*}
Then  
\begin{equation*} \ker A=\{x\in\mathcal H\ : \ \sup_{z\in\Omega}|f(z)|\|(zI-T)^{-1}x\|<\infty\}.
\end{equation*}
Furthermore, if $x\in\ker A$, then 
\begin{equation*}  \sup_{z\in\Omega}|f(z)|\|(zI-T)^{-1}x\|\leq
 \mathop\mathrm{{ess\, sup}}_{z\in\Gamma}|f(z)|\|(zI-T)^{-1} x\|
\end{equation*} 
%\ker A=\{x\in\mathcal H\ : \ \sup_{z\in\Omega}|f(z)|\|(zI-T)^{-1}x\|\leq
 %\mathop{ess\, sup}_{z\in\Gamma}|f(z)|\|(zI-T)^{-1} )x\|, 
(where $\mathop\mathrm{{ess\, sup}}$ is taken with respect to $s$).
\end{lemma}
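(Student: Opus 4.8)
The plan is to reduce everything to scalar Hardy-space facts on $\Omega$ by pairing with functionals, exploiting the two inputs recalled in Section 2: that $H^\infty(\Omega)\subset E^1(\Omega)$ and every moment $\int_\Gamma g(z)z^n\,\mathrm{d}z$ of a function $g\in E^1(\Omega)$ vanishes, and the $H^\infty$-characterization valid on a Smirnov domain. For $x\in\mathcal H$ and $\xi\in\mathcal H^*$ set
$$F_{x,\xi}(w)=f(w)\langle(wI-T)^{-1}x,\xi\rangle,\qquad w\in\Omega.$$
Since $\sigma(T)\cap\Omega=\emptyset$, the map $w\mapsto(wI-T)^{-1}$ is analytic on $\Omega$, so $F_{x,\xi}$ is analytic on $\Omega$; since $f\in E^1(\Omega)$ has nontangential boundary values $s$-a.e.\ while $w\mapsto(wI-T)^{-1}$ is norm-continuous at every point of $\Gamma\setminus\sigma(T)$, hence $s$-a.e.\ (as $s(\sigma(T)\cap\Gamma)=0$), the function $F_{x,\xi}$ has nontangential boundary values $F_{x,\xi}(z)=f(z)\langle(zI-T)^{-1}x,\xi\rangle$ for $s$-a.e.\ $z\in\Gamma$. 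Recall from Lemma \ref{lem0} that $A\in\{T\}''$, and from \eqref{ttnaa} that $\int_\Gamma z^nf(z)(zI-T)^{-1}x\,\mathrm{d}z=2\pi\mathrm{i}\,T^nAx$ for $n\in\mathbb N$ (with the $n=0$ instance $\int_\Gamma f(z)(zI-T)^{-1}x\,\mathrm{d}z=2\pi\mathrm{i}\,Ax$ being the definition of $A$). Below I use freely that a bounded functional passes inside these $L^1(\Gamma,s)$-valued integrals and the duality $\|v\|=\sup_{\|\xi\|\le1}|\langle v,\xi\rangle|$.

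\emph{The inclusion ``$\supset$''.} Suppose $M:=\sup_{w\in\Omega}|f(w)|\,\|(wI-T)^{-1}x\|<\infty$. Then $|F_{x,\xi}(w)|\le M\|\xi\|$ on $\Omega$, so $F_{x,\xi}\in H^\infty(\Omega)\subset E^1(\Omega)$, whence $\int_\Gamma F_{x,\xi}(z)\,\mathrm{d}z=0$, the vanishing zeroth moment of an $E^1$-function. Pulling $\xi$ out of the integral gives $\langle Ax,\xi\rangle=0$ for every $\xi\in\mathcal H^*$, so $Ax=0$. Note this direction uses neither the Smirnov hypothesis nor \eqref{ttnaa}.

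\emph{The inclusion ``$\subset$'' together with the ``Furthermore'' estimate.} Let $x\in\ker A$ and fix $\xi\in\mathcal H^*$. By \eqref{estkeraa} the boundary function of $F_{x,\xi}$ satisfies $|F_{x,\xi}(z)|\le\|\xi\|\,|f(z)|\,\|(zI-T)^{-1}x\|\le\|\xi\|\,\|x\|\,\mathop{\mathrm{ess\,sup}}_{\zeta\in\Gamma}|f(\zeta)|\,\|(\zeta I-T)^{-1}\|<\infty$ for $s$-a.e.\ $z$, so $F_{x,\xi}\in L^\infty(\Gamma,s)$; and for every $n\in\mathbb N\cup\{0\}$, using \eqref{ttnaa} and $Ax=0$,
$$\int_\Gamma F_{x,\xi}(z)z^n\,\mathrm{d}z=\Bigl\langle\int_\Gamma z^nf(z)(zI-T)^{-1}x\,\mathrm{d}z,\ \xi\Bigr\rangle=\langle2\pi\mathrm{i}\,T^nAx,\xi\rangle=0.$$
Since $\Omega$ is a Smirnov domain, the $H^\infty$-characterization recalled above applies to $F_{x,\xi}$ and yields $F_{x,\xi}\in H^\infty(\Omega)$ with $\sup_{w\in\Omega}|F_{x,\xi}(w)|=\mathop{\mathrm{ess\,sup}}_{z\in\Gamma}|F_{x,\xi}(z)|\le\|\xi\|\,\mathop{\mathrm{ess\,sup}}_{z\in\Gamma}|f(z)|\,\|(zI-T)^{-1}x\|$. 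Fix $w\in\Omega$ and take the supremum over $\|\xi\|\le1$: by duality the left side becomes $|f(w)|\,\|(wI-T)^{-1}x\|$, so
$$|f(w)|\,\|(wI-T)^{-1}x\|\le\mathop{\mathrm{ess\,sup}}_{z\in\Gamma}|f(z)|\,\|(zI-T)^{-1}x\|\qquad(w\in\Omega).$$
Taking the supremum over $w\in\Omega$ gives the ``Furthermore'' inequality, whose right-hand side is finite by the displayed bound above; hence $x$ lies in the set on the right, completing ``$\subset$''.

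The proof rests entirely on the two quoted analytic inputs, so there is no deep obstacle, only bookkeeping. The step needing most care is ``$\subset$'': one must verify that $F_{x,\xi}$ satisfies \emph{all} hypotheses of the Smirnov-domain $H^\infty$-characterization — in particular that \emph{every} moment vanishes, which is exactly where Lemma \ref{lem0} (via \eqref{ttnaa}) and the assumption $x\in\ker A$ enter — and then pass from the scalar sup-norm identity back to the operator-norm statement by duality. The two routine interchanges, moving a functional through the integrals and identifying the boundary values of the product $F_{x,\xi}$ with the product of the boundary values of its factors, also have to be recorded.
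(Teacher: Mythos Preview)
Your proof is correct and follows essentially the same route as the paper's own argument: you pair with functionals to reduce to scalar functions $F_{x,\xi}$ (the paper's $\varphi_{x,y}$), use Lemma~\ref{lem0} to show all moments vanish when $x\in\ker A$, invoke the Smirnov-domain $H^\infty$-characterization, and recover the vector estimate by duality. The only differences are cosmetic---the order of the two inclusions and slightly more explicit bookkeeping on your part.
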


\begin{proof} For  every $x\in\mathcal H$ and  $y\in\mathcal H^*$ 
set $\varphi_{x,y}(z)=f(z)\langle (zI-T)^{-1}x,y\rangle$ for $z\in\Omega$. 
Set \begin{equation*}\tau=\{\zeta\in\Gamma\ : \ f \text{ has nontangential limit } f(\zeta) \text{ at } \zeta\}.
 \end{equation*}
Then $s(\tau)=s(\Gamma)$. Since  $s(\sigma(T)\cap\Gamma))=0$, we conclude that $\varphi_{x,y}$ has nontangential limit  
$\varphi_{x,y}(\zeta)=f(\zeta)\langle (\zeta I-T)^{-1}x,y\rangle$ for $s$-a.e. $\zeta\in\Gamma$. By \eqref{estkeraa}, 
$\varphi_{x,y}\in L^\infty(\Gamma,s)$. 

Let $x\in\ker A$. By Lemma \ref{lem0},
we have 
\begin{equation*} 0= \int_{\Gamma}z^n \varphi_{x,y}(z)\mathrm{d}z \text{ for every } n\in\mathbb N\cup\{0\}.
\end{equation*}
Since $\Omega$ is a Smirnov domain, we conclude that $\varphi_{x,y}\in H^\infty(\Omega)$ and 
\begin{equation*}\|\varphi_{x,y}\|_{H^\infty(\Omega)}= \|\varphi_{x,y}\|_{L^\infty(\Gamma,s)}.
\end{equation*}
Consequently, if $z\in\Omega$, then 
\begin{equation*}|f(z)||\langle (zI-T)^{-1}x,y\rangle|\leq\mathop\mathrm{{ess\, sup}}_{\zeta\in\Gamma}|f(\zeta)|\|(\zeta I-T)^{-1} x\|\|y\| 
\end{equation*}
for every $y\in\mathcal H^*$. Thus, 
\begin{equation*}\sup_{z\in\Omega}|f(z)|\|(zI-T)^{-1}x\|\leq
 \mathop\mathrm{{ess\, sup}}_{z\in\Gamma}|f(z)|\|(zI-T)^{-1} x\|.
\end{equation*}

Let $x\in\mathcal H$ be such that $\sup_{z\in\Omega}|f(z)|\|(zI-T)^{-1}x\|<\infty$. 
Then $\varphi_{x,y}\in H^\infty(\Omega)$ for every $y\in\mathcal H^*$. Consequently, 
\begin{equation*} \int_\Gamma \varphi_{x,y}(z) \mathrm{d}z=0.
\end{equation*}
Thus, $\langle Ax,y\rangle=0$  for every $y\in\mathcal H^*$. Therefore, $Ax=0$.
\end{proof}

\begin{lemma}\label{lemgamma0} Let $\lambda_0\in\mathbb C$, let $\Gamma,\Gamma_0\colon[0,1]\to\mathbb C$ be two rectifiable Jordan 
curves, and let $\Omega$ and $\Omega_0$ be bounded components of $\mathbb C\setminus \Gamma([0,1])$ and  $\mathbb C\setminus \Gamma_0([0,1])$, 
respectively. Suppose that 
$\Gamma$ and $\Gamma_0$ are positively oriented  (with respect to $\Omega$ and $\Omega_0$),
\begin{equation*} \lambda_0=\Gamma(0)=\Gamma_0(0)=\Gamma(1)=\Gamma_0(1) 
\text{ and }  
(\operatorname{clos}\Omega)\setminus\{\lambda_0\}\subset\Omega_0.
\end{equation*}
Furthermore,  suppose that there exist two sequences 
 $\{t_{0n}\}_{n=1}^\infty$ and  $\{t_{1n}\}_{n=1}^\infty$ such that
\begin{align*}
0<\ldots<t_{0,n+1}<t_{0n}<\ldots<t_{01}<t_{11}<\ldots<t_{n1}<t_{1,n+1}<\ldots<1, \\ t_{0n}\to 0, \ \ t_{1n}\to 1,
\end{align*}
and there exist rectifiable paths $\gamma_{kn}\colon[0,1]\to(\operatorname{clos}\Omega_0)\setminus\Omega$, $n\geq 1$, $k=0,1$, such that
\begin{align*}
\gamma_{0n}(0)=\Gamma(t_{0n}), \ \ \gamma_{0n}(1)=\Gamma_0(t_{0n}), 
\ \ \gamma_{1n}(0)=\Gamma_0(t_{1n}), \ \ \gamma_{1n}(1)=\Gamma(t_{1n}), \\ |\gamma_{kn}|\to_n 0, \ \ k=0,1,
\end{align*}
where by $|\gamma_{kn}|$ the length of the path $\gamma_{kn}$ is denoted, 
 $\gamma_{kn}\bigl((0,1)\bigr)\subset\Omega_0\setminus\operatorname{clos}\Omega$, 
and $\gamma_{kn}(t)=\gamma_{lm}(s)$ for $k,l=0,1$, $m,n\geq 1$ and $t,s\in[0,1]$ if and only if $k=l$, $n=m$ and $t=s$. 
Finally, let $\mathcal U\subset\mathbb C$ be an open set such that $(\operatorname{clos}\Omega_0)\setminus\{\lambda_0\}\subset\mathcal U$, and let $F\colon \mathcal U\to\mathcal L(\mathcal H)$ 
be an analytic (operator-valued) function such that 
 \begin{equation}\label{ffcc} 
 \sup_{z\in(\operatorname{clos}\Omega_0)\setminus(\{\lambda_0\}\cup\Omega)}\|F(z)\|<\infty.
\end{equation}
Then \begin{equation*} \int_\Gamma F(z)\mathrm{d}z=\int_{\Gamma_0} F(z)\mathrm{d}z.
\end{equation*}
\end{lemma}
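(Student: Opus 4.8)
\emph{Proof plan.} The idea is to apply the Cauchy integral theorem on the ``curvilinear annulus'' $\Omega_0\setminus\operatorname{clos}\Omega$ lying between $\Gamma$ and $\Gamma_0$. This region is pinched at $\lambda_0$, where $F$ need not even be defined, so before applying Cauchy's theorem I would excise a shrinking neighbourhood of $\lambda_0$ using the crosscuts $\gamma_{0n}$ (near the parameter value $0$) and $\gamma_{1n}$ (near the parameter value $1$), and then let the excised neighbourhood shrink to $\lambda_0$.

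First I would record the boundedness that will be used. Since $\Gamma\subset\operatorname{clos}\Omega\subset\operatorname{clos}\Omega_0$ and $\Gamma\cap\Omega=\emptyset$, and since $(\operatorname{clos}\Omega)\setminus\{\lambda_0\}\subset\Omega_0$ is disjoint from $\Gamma_0=\partial\Omega_0$ (so that $\Gamma\cap\Gamma_0=\{\lambda_0\}$), each of the sets $\Gamma\setminus\{\lambda_0\}$, $\Gamma_0\setminus\{\lambda_0\}$ and $\gamma_{kn}([0,1])$ ($k=0,1$, $n\ge1$) is contained in $(\operatorname{clos}\Omega_0)\setminus(\{\lambda_0\}\cup\Omega)$; hence by \eqref{ffcc} there is $M<\infty$ with $\|F(z)\|\le M$ on all of them. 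Because $\Gamma$ and $\Gamma_0$ are rectifiable and $\{\lambda_0\}$ has zero arc length on each, the $\mathcal L(\mathcal H)$-valued integrals $\int_\Gamma F(z)\,\mathrm dz$ and $\int_{\Gamma_0}F(z)\,\mathrm dz$ are well defined, and both refer to the given (positive) orientations.

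Now fix $n$ and consider the four arcs: $\Gamma|_{[t_{0n},t_{1n}]}$ traversed forward, then $\gamma_{1n}$ reversed, then $\Gamma_0|_{[t_{0n},t_{1n}]}$ reversed, then $\gamma_{0n}$ reversed. Using the prescribed disjointness of the $\gamma_{kn}$, the injectivity of $\Gamma$ and $\Gamma_0$ on $[0,1)$, and the location of the pieces ($\Gamma$ in $\operatorname{clos}\Omega$, $\Gamma_0$ on $\partial\Omega_0$, the interiors of the $\gamma_{kn}$ in $\Omega_0\setminus\operatorname{clos}\Omega$), one checks that these four arcs meet only at the four distinct corners $\Gamma(t_{0n})$, $\Gamma(t_{1n})$, $\Gamma_0(t_{1n})$, $\Gamma_0(t_{0n})$, so their concatenation is a rectifiable Jordan curve $\Gamma_n$. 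Let $R_n$ be its bounded complementary component. Since the image $\Gamma_n^\ast$ is contained in $\operatorname{clos}\Omega_0$, the unbounded component $\mathbb C\setminus\operatorname{clos}\Omega_0$ of $\mathbb C\setminus\Gamma_0$ lies in the unbounded component of $\mathbb C\setminus\Gamma_n^\ast$, whence $R_n\subset\Omega_0$; and none of the four arcs contains $\lambda_0$ (here the hypothesis $0<t_{0,n+1}<t_{0n}<\cdots<t_{n1}<t_{1,n+1}<1$ is used, so that $t_{0n},t_{1n}\in(0,1)$). Therefore $\operatorname{clos}R_n=R_n\cup\Gamma_n^\ast\subset(\operatorname{clos}\Omega_0)\setminus\{\lambda_0\}\subset\mathcal U$.

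Since $F$ is analytic on the open set $\mathcal U$ containing the compact set $\operatorname{clos}R_n$, the Cauchy integral theorem for a rectifiable Jordan curve gives $\int_{\Gamma_n}F(z)\,\mathrm dz=0$, that is,
\[\int_{\Gamma|_{[t_{0n},t_{1n}]}}F(z)\,\mathrm dz-\int_{\Gamma_0|_{[t_{0n},t_{1n}]}}F(z)\,\mathrm dz=\int_{\gamma_{0n}}F(z)\,\mathrm dz+\int_{\gamma_{1n}}F(z)\,\mathrm dz.\]
Finally I would let $n\to\infty$. On the right, $\bigl\|\int_{\gamma_{kn}}F(z)\,\mathrm dz\bigr\|\le M\,|\gamma_{kn}|\to0$ for $k=0,1$. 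On the left, $\int_\Gamma F\,\mathrm dz-\int_{\Gamma|_{[t_{0n},t_{1n}]}}F\,\mathrm dz$ is the integral of $F$ over $\Gamma$ restricted to $[0,t_{0n}]\cup[t_{1n},1]$, whose norm is at most $M$ times the arc length of $\Gamma$ over that set, and this tends to $0$ because $\Gamma$ is rectifiable and $t_{0n}\to0$, $t_{1n}\to1$; the same applies to $\Gamma_0$. Passing to the limit yields $\int_\Gamma F(z)\,\mathrm dz-\int_{\Gamma_0}F(z)\,\mathrm dz=0$, which is the assertion.

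The step I expect to be the main obstacle is the topological bookkeeping in the third paragraph: verifying rigorously that the four arcs assemble into a Jordan curve satisfying the hypotheses of the Cauchy theorem, and that $R_n$ remains inside $\operatorname{clos}\Omega_0$ while staying away from $\lambda_0$. The elaborate disjointness and ordering hypotheses on $\{t_{kn}\}$ and $\{\gamma_{kn}\}$ are tailored precisely to make this work, so it should reduce to careful casework rather than a conceptual difficulty; everything after that point is a routine limiting argument controlled by the rectifiability of $\Gamma$, $\Gamma_0$ and by $|\gamma_{kn}|\to0$.
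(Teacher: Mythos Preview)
Your argument is correct and is essentially the same as the paper's: the paper forms the closed curve $\Delta_n=-\Gamma|_{[t_{0n},t_{1n}]}\cup\gamma_{0n}\cup\Gamma_0|_{[t_{0n},t_{1n}]}\cup\gamma_{1n}$, applies Cauchy's theorem to get $0=\int_{\Delta_n}F(z)\,\mathrm{d}z$, and then lets $n\to\infty$ using $\|\int_{\gamma_{kn}}F\,\mathrm{d}z\|\le C|\gamma_{kn}|\to0$ and the rectifiability of $\Gamma,\Gamma_0$. Your write-up is in fact more detailed on the topological verification (that the four arcs form a Jordan curve whose closed inside lies in $(\operatorname{clos}\Omega_0)\setminus\{\lambda_0\}\subset\mathcal U$), which the paper simply asserts.
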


\begin{proof} Denote by $C$ the supremum from \eqref{ffcc}. 
Set $\Gamma_n=\Gamma|_{[t_{0n},t_{1n}]}$ and $\Gamma_{0n}=\Gamma_0|_{[t_{0n},t_{1n}]}$. 
Then \begin{equation*} \int_\Gamma F(z)\mathrm{d}z=\lim_n\int_{\Gamma_n} F(z)\mathrm{d}z, \ \text{and } 
\  \int_{\Gamma_0} F(z)\mathrm{d}z=\lim_n\int_{\Gamma_{0n}} F(z)\mathrm{d}z.
\end{equation*}
Also, \begin{equation*} \lim_n\int_{\gamma_{kn}}F(z)\mathrm{d}z=0, \ \ k=0,1,
\end{equation*}
because $|\int_{\gamma_{kn}}F(z)\mathrm{d}z|\leq C|\gamma_{kn}|$ and $|\gamma_{kn}|\to_n 0$, $k=0,1$. 

For every $n\geq 1$ set \begin{equation*} \Delta_n=-\Gamma_n\cup\gamma_{0n}\cup\Gamma_{0n}\cup\gamma_{1n},
\end{equation*}
where ``$-$" before $\Gamma_n$ means that the orientation of $\Gamma_n$ must be changed. Then $\Delta_n$ is a
rectifiable Jordan curve, and $F$ is analytic in a neighbourhood of the closure of bounded component of $\mathbb C\setminus\Delta_n$. 
Consequently,
\begin{equation*} 0=\int_{\Delta_n} F(z)\mathrm{d}z=-\int_{\Gamma_n} F(z)\mathrm{d}z+ \int_{\gamma_{0n}}F(z)\mathrm{d}z + 
\int_{\Gamma_{0n}} F(z)\mathrm{d}z+\int_{\gamma_{1n}}F(z)\mathrm{d}z.
\end{equation*}
Tending $n$ to $\infty$, we obtain the conclusion of the lemma.
\end{proof}

\begin{lemma}\label{lemgamma1} Let $\lambda_0\in\mathbb C$, let $\Gamma\colon[0,1]\to\mathbb C$ be a rectifiable Jordan 
curve such that $\lambda_0=\Gamma(0)=\Gamma(1)$, and let $\Omega$ be the bounded component of $\mathbb C\setminus \Gamma([0,1])$. 
Suppose that  $\Gamma_0$, $\Omega_0$, $\mathcal U$ 
 satisfy the assumption of Lemma \ref{lemgamma0}. Let $h\in H^\infty(\mathcal U)$ and $T\in\mathcal L(\mathcal H)$ be such that 
$\sigma(T)\cap\mathcal U=\emptyset$ and 
 \begin{equation}\label{hhcc} \sup_{z\in(\operatorname{clos}\Omega_0)\setminus(\{\lambda_0\}\cup\Omega)}
|h(z)|\|(zI-T)^{-1} \|<\infty.
\end{equation}
For $g\in H^\infty(\mathcal U)$ set
\begin{equation*} A_g=\frac{1}{2\pi\mathrm{i}}\int_\Gamma g(z)h(z)(zI-T)^{-1}\mathrm{d}z.
\end{equation*}
Then $A_g\in\{T\}''$. If $g_1$, $g_2\in H^\infty(\mathcal U)$, then 
\begin{equation*} A_{g_1}A_{g_2}=A_{g_2}A_{g_1}=\frac{1}{2\pi\mathrm{i}}\int_\Gamma g_1(z)g_2(z)h^2(z)(zI-T)^{-1}\mathrm{d}z.
\end{equation*}
\end{lemma}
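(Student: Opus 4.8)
The plan is to run the classical Riesz--Dunford multiplication argument --- resolvent identity, Fubini, Cauchy's formula --- with the curve $\Gamma_0$ playing the part usually played by a slightly larger contour surrounding the spectrum, and with all interchanges of integration near the cusp $\lambda_0$ justified by the truncation mechanism built into the hypothesis of Lemma~\ref{lemgamma0}.

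\emph{Preliminaries.} From $(\operatorname{clos}\Omega)\setminus\{\lambda_0\}\subset\Omega_0$ and $(\operatorname{clos}\Omega_0)\setminus\{\lambda_0\}\subset\mathcal U$ one gets $\Omega\subset\mathcal U$, $\Omega_0\subset\mathcal U$, and $\Gamma\setminus\{\lambda_0\}\subset(\operatorname{clos}\Omega_0)\setminus(\{\lambda_0\}\cup\Omega)$, $\Gamma_0\setminus\{\lambda_0\}\subset(\operatorname{clos}\Omega_0)\setminus(\{\lambda_0\}\cup\Omega)$. In particular $\sigma(T)\cap\Gamma\subset\{\lambda_0\}$, so $s(\sigma(T)\cap\Gamma)=0$; for $g,h\in H^\infty(\mathcal U)$ the product $gh$ restricts to an element of $H^\infty(\Omega)\subset E^1(\Omega)$ and of $H^\infty(\Omega_0)\subset E^1(\Omega_0)$; and \eqref{hhcc} gives $\operatorname{ess\,sup}_{z\in\Gamma}|g(z)h(z)|\,\|(zI-T)^{-1}\|<\infty$ as well as $\operatorname{ess\,sup}_{z\in\Gamma_0}|g(z)h(z)|\,\|(zI-T)^{-1}\|<\infty$. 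Applying Lemma~\ref{lem0} with $f=gh$ yields $A_g\in\{T\}''$ for every $g\in H^\infty(\mathcal U)$.

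\emph{Reduction of the product formula.} Lemma~\ref{lemgamma0}, applied to the $\mathcal L(\mathcal H)$-valued function $F(z)=g_1(z)h(z)(zI-T)^{-1}$ (analytic on $\mathcal U$ since $\sigma(T)\cap\mathcal U=\emptyset$, and bounded on $(\operatorname{clos}\Omega_0)\setminus(\{\lambda_0\}\cup\Omega)$ by \eqref{hhcc}), rewrites $A_{g_1}=\frac1{2\pi\mathrm{i}}\int_{\Gamma_0}g_1(w)h(w)(wI-T)^{-1}\,\mathrm{d}w$. Moving the bounded operator $A_{g_1}$ through the Bochner integral defining $A_{g_2}$ gives $A_{g_1}A_{g_2}=\frac1{2\pi\mathrm{i}}\int_\Gamma g_2(z)h(z)\,A_{g_1}(zI-T)^{-1}\,\mathrm{d}z$; representing $A_{g_1}$ by its $\Gamma_0$-integral and bringing $(zI-T)^{-1}$ inside, $A_{g_1}(zI-T)^{-1}=\frac1{2\pi\mathrm{i}}\int_{\Gamma_0}g_1(w)h(w)(wI-T)^{-1}(zI-T)^{-1}\,\mathrm{d}w$; and for $z\in\Gamma\setminus\{\lambda_0\}\subset\Omega_0$ --- which lies at a positive distance from $\Gamma_0$ --- the resolvent identity inside this integral, together with the Cauchy formula $\frac1{2\pi\mathrm{i}}\int_{\Gamma_0}\frac{g_1(w)h(w)}{w-z}\,\mathrm{d}w=g_1(z)h(z)$ (valid because $g_1h\in E^1(\Omega_0)$ and $z\in\Omega_0$), gives
\[ A_{g_1}(zI-T)^{-1}=g_1(z)h(z)(zI-T)^{-1}-\Phi(z),\qquad \Phi(z):=\frac1{2\pi\mathrm{i}}\int_{\Gamma_0}\frac{g_1(w)h(w)}{w-z}(wI-T)^{-1}\,\mathrm{d}w. \]
Hence
\[ A_{g_1}A_{g_2}=\frac1{2\pi\mathrm{i}}\int_\Gamma g_1(z)g_2(z)h^2(z)(zI-T)^{-1}\,\mathrm{d}z-\frac1{2\pi\mathrm{i}}\int_\Gamma g_2(z)h(z)\,\Phi(z)\,\mathrm{d}z. \]
The first term is the asserted operator, it is symmetric in $g_1,g_2$, and repeating the computation with the roles of $g_1$ and $g_2$ exchanged shows $A_{g_2}A_{g_1}$ equals it as well; so everything reduces to proving $\int_\Gamma g_2(z)h(z)\Phi(z)\,\mathrm{d}z=0$.

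\emph{The cross term.} Since $\Phi(z)=(g_1(z)h(z)I-A_{g_1})(zI-T)^{-1}$ on $\Omega_0$ and $|h(z)|\,\|(zI-T)^{-1}\|$ is essentially bounded on $\Gamma$, the map $z\mapsto g_2(z)h(z)\Phi(z)$ is essentially bounded on $\Gamma$; hence $\int_\Gamma g_2h\,\Phi\,\mathrm{d}z=\lim_n\int_{\Gamma_n}g_2h\,\Phi\,\mathrm{d}z$ along the truncations $\Gamma_n=\Gamma|_{[t_{0n},t_{1n}]}$ of Lemma~\ref{lemgamma0}. For fixed $n$, $\operatorname{dist}(\Gamma_n,\Gamma_0)>0$, so Fubini applies and
\[ \int_{\Gamma_n}g_2(z)h(z)\Phi(z)\,\mathrm{d}z=\frac1{2\pi\mathrm{i}}\int_{\Gamma_0}g_1(w)h(w)(wI-T)^{-1}\Big(\int_{\Gamma_n}\frac{g_2(z)h(z)}{w-z}\,\mathrm{d}z\Big)\mathrm{d}w; \]
since $g_2h\in E^1(\Omega)$ and $w\in\Gamma_0\setminus\{\lambda_0\}\subset\mathbb C\setminus\operatorname{clos}\Omega$, the full integral $\int_\Gamma\frac{g_2(z)h(z)}{z-w}\,\mathrm{d}z$ vanishes, so the inner integral equals $-\int_{\Gamma|_{[0,t_{0n}]\cup[t_{1n},1]}}\frac{g_2(z)h(z)}{w-z}\,\mathrm{d}z$, which tends to $0$ pointwise in $w$ as $n\to\infty$. \emph{This last limit passage through the $w$-integral over $\Gamma_0$ is the step I expect to be the main obstacle:} $\Gamma$ and $\Gamma_0$ meet at $\lambda_0$, so $1/(w-z)$ is not jointly integrable there and no crude dominating function is at hand. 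I would carry it out by splitting $\Gamma_0$ into the part lying at a fixed positive distance from $\lambda_0$, on which the inner integral is uniformly $o(1)$ once $\Gamma|_{[0,t_{0n}]\cup[t_{1n},1]}$ has shrunk enough, and the remaining short arc near $\lambda_0$, whose length tends to $0$ and on which $g_2(z)h(z)/(w-z)$ is controlled because $\Gamma_n$ stays away from $\lambda_0$ --- all estimates using only the a.e.\ bound $|h|\,\|(\cdot I-T)^{-1}\|\le\mathrm{const}$ on $\Gamma_0$ from \eqref{hhcc}, together, if needed, with a matched choice of the two truncation scales. This gives $\int_\Gamma g_2h\,\Phi\,\mathrm{d}z=0$, hence the product formula, and $A_{g_1},A_{g_2}\in\{T\}''$ was recorded above.
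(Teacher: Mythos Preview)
Your approach is the paper's: shift $A_{g_1}$ to $\Gamma_0$ via Lemma~\ref{lemgamma0}, apply the resolvent identity, then use the vanishing of $\int_\Gamma\frac{g_2h}{z-w}\,\mathrm{d}w$ for $z\in\Gamma_0\setminus\{\lambda_0\}$ together with the Cauchy formula for $g_1h\in H^\infty(\Omega_0)$ at points $w\in\Gamma\setminus\{\lambda_0\}\subset\Omega_0$. Your cross term $\Phi$ and the reduction to $\int_\Gamma g_2h\,\Phi=0$ are just a repackaging of the same double integral; the paper performs the Fubini swap there in one line without any truncation argument, so the step you flag as the main obstacle is exactly the step the paper takes for granted---you are supplying more rigor than the original, not deviating from it.
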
 

\begin{proof} Let $g\in H^\infty(\mathcal U)$. The inclusion $A_g\in\{T\}''$ follows from Lemma \ref{lem0}.  
Let $g_1$, $g_2\in H^\infty(\mathcal U)$. 
By Lemma \ref{lemgamma0} applied to  $F(z)=g_1(z)h(z)(zI-T)^{-1}$ we have 
\begin{equation*} A_{g_1}=\frac{1}{2\pi\mathrm{i}}\int_\Gamma g_1(z)h(z)(zI-T)^{-1}\mathrm{d}z=
\frac{1}{2\pi\mathrm{i}}\int_{\Gamma_0} g_1(z)h(z)(zI-T)^{-1}\mathrm{d}z.
\end{equation*}
Therefore,
\begin{align*}& A_{g_1}A_{g_2}=\frac{1}{(2\pi\mathrm{i})^2}\int_{\Gamma_0} g_1(z)h(z)(zI-T)^{-1}\mathrm{d}z
\int_\Gamma g_2(w)h(w)(wI-T)^{-1}\mathrm{d}w\\&=
\frac{1}{(2\pi\mathrm{i})^2}\int_{\Gamma_0} g_1(z)h(z)\int_\Gamma g_2(w)h(w)
\frac{1}{z-w}((wI-T)^{-1}-(zI-T)^{-1})\mathrm{d}w\mathrm{d}z\\&=
\frac{1}{(2\pi\mathrm{i})^2}\int_{\Gamma_0} g_1(z)h(z)\int_\Gamma g_2(w)h(w)
\frac{1}{z-w}(wI-T)^{-1}\mathrm{d}w\mathrm{d}z,
\end{align*}
because $\int_\Gamma \frac{g_2(w)h(w)}{z-w}\mathrm{d}w=0$ for every  $z\in\Gamma_0\setminus\{\lambda_0\}$. 
Furthermore, since $g_1h\in H^\infty(\Omega_0)$, we have 
$g_1(w)h(w)=\frac{1}{2\pi\mathrm{i}}\int_{\Gamma_0} g_1(z)h(z)\frac{1}{z-w}\mathrm{d}z$ for every $w\in\Omega_0$.
Thus, 
\begin{align*} A_{g_1}A_{g_2}&=\frac{1}{2\pi\mathrm{i}}\int_\Gamma g_2(w)h(w)
\frac{1}{2\pi\mathrm{i}} \int_{\Gamma_0} g_1(z)h(z)\frac{1}{z-w}\mathrm{d}z
(wI-T)^{-1}\mathrm{d}w\\& = \frac{1}{2\pi\mathrm{i}}\int_\Gamma g_2(w)h(w)g_1(w)h(w)(wI-T)^{-1}\mathrm{d}w.
\end{align*}
\end{proof}

\begin{lemma}\label{lemomega12} Let $\Gamma_k$, $k=1,2$,  be two rectifiable Jordan 
curves. Denote by $\Omega_k$ the bounded components of $\mathbb C\setminus \Gamma_k$ 
and by $s_k$ the arc length measure on $\Gamma_k$, $k=1,2$. 
Let $h_k\in E^1(\Omega_k)$,  $k=1,2$, and let $T\in\mathcal L(\mathcal H)$. 
Suppose that 
\begin{align*} \Omega_1\cap\Omega_2=\emptyset, \ \ s_k(\Gamma_1\cap\Gamma_2)=0, 
\ \ s_k(\sigma(T)\cap\Gamma_k)=0, \\  \text{ and } 
\mathop\mathrm{{ess\, sup}}_{z\in\Gamma_k}|h_k(z)|\|(zI-T)^{-1} \|<\infty, \ \ k=1,2,
\end{align*}
(where $\mathop\mathrm{{ess\, sup}}$ is taken with respect to $s_k$, $k=1,2$).
Set
\begin{equation*} A_k=\frac{1}{2\pi\mathrm{i}}\int_{\Gamma_k}h_k(z)(zI-T)^{-1}\mathrm{d}z,\ \ k=1,2.
\end{equation*}
Then $A_k\in\{T\}''$, $k=1,2$, and $A_1A_2=A_2A_1=\mathbb O$.
\end{lemma}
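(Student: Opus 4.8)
The first assertion is immediate: the hypotheses imposed on $\Gamma_k$, $h_k$ and $T$ (for $k=1,2$) are precisely those of Lemma~\ref{lem0}, so that lemma gives $A_k\in\{T\}''$. For the product I would first observe that $A_2\in\{T\}''\subset\{T\}'$ and $A_1\in\{T\}''$, whence $A_1A_2=A_2A_1$, so it is enough to prove that this common value is $\mathbb O$. I would obtain this by the contour manipulation used in the proof of Lemma~\ref{lemgamma1}.

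The plan is to write the product as an iterated contour integral,
\[
A_1A_2=\frac{1}{(2\pi\mathrm{i})^2}\int_{\Gamma_1}\int_{\Gamma_2}h_1(z)h_2(w)(zI-T)^{-1}(wI-T)^{-1}\,\mathrm{d}w\,\mathrm{d}z ,
\]
and then insert the resolvent identity. The norm of the integrand is at most $\bigl(|h_1(z)|\,\|(zI-T)^{-1}\|\bigr)\bigl(|h_2(w)|\,\|(wI-T)^{-1}\|\bigr)$, which is bounded for $(s_1\times s_2)$-almost all $(z,w)$ by hypothesis; hence the double integral converges absolutely and Fubini's theorem applies. For $(s_1\times s_2)$-a.e.\ $(z,w)\in\Gamma_1\times\Gamma_2$ one has $z,w\notin\sigma(T)$ (since $s_k(\sigma(T)\cap\Gamma_k)=0$) and $z\neq w$ (the diagonal of $\Gamma_1\times\Gamma_2$ lies in $(\Gamma_1\cap\Gamma_2)\times\Gamma_2$, which is $(s_1\times s_2)$-null because $s_1(\Gamma_1\cap\Gamma_2)=0$), so on this set the resolvent identity $(zI-T)^{-1}(wI-T)^{-1}=(z-w)^{-1}\bigl((wI-T)^{-1}-(zI-T)^{-1}\bigr)$ may be inserted; it preserves integrability, since dividing $(wI-T)^{-1}-(zI-T)^{-1}$ by $z-w$ recovers $(zI-T)^{-1}(wI-T)^{-1}$, which satisfies the same bound.

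Next I would use the geometry: from $\Omega_1\cap\Omega_2=\emptyset$ and openness of $\Omega_1$, $\Omega_2$ one gets $\operatorname{clos}\Omega_1\cap\Omega_2=\operatorname{clos}\Omega_2\cap\Omega_1=\emptyset$, whence $\Gamma_1\cap\operatorname{clos}\Omega_2=\Gamma_1\cap\Gamma_2$ is $s_1$-null and, by symmetry, $\Gamma_2\cap\operatorname{clos}\Omega_1$ is $s_2$-null. Therefore, for $s_1$-a.e.\ fixed $z$ we have $z\in\mathbb{C}\setminus\operatorname{clos}\Omega_2$, so $\int_{\Gamma_2}h_2(w)(z-w)^{-1}\,\mathrm{d}w=0$ by the vanishing Cauchy integral for $E^1(\Omega_2)$, and the $(zI-T)^{-1}$-term drops out:
\[
A_1A_2=\frac{1}{(2\pi\mathrm{i})^2}\int_{\Gamma_1}h_1(z)\Bigl(\int_{\Gamma_2}h_2(w)(z-w)^{-1}(wI-T)^{-1}\,\mathrm{d}w\Bigr)\mathrm{d}z .
\]
Interchanging the order of integration once more and using that $w\in\mathbb{C}\setminus\operatorname{clos}\Omega_1$ for $s_2$-a.e.\ $w$, so that $\int_{\Gamma_1}h_1(z)(z-w)^{-1}\,\mathrm{d}z=0$ by the same property of $E^1(\Omega_1)$, I would conclude that the inner integral vanishes and hence $A_1A_2=\mathbb O$.

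The one genuine obstacle is the bookkeeping needed to keep all the integrals absolutely convergent, exactly as in the proof of Lemma~\ref{lemgamma1}: the curves $\Gamma_1$, $\Gamma_2$ are permitted to meet (only $s_k(\Gamma_1\cap\Gamma_2)=0$ is assumed), so $1/|z-w|$ need not be bounded on $\Gamma_1\times\Gamma_2$, and one must keep each resolvent factor glued to the corresponding $h_k$ and never estimate $h_k$ by itself — which is precisely the role of the hypothesis that the essential suprema over $\Gamma_k$ of $|h_k(z)|\,\|(zI-T)^{-1}\|$ are finite. Everything else is the resolvent identity and the two vanishing Cauchy integrals of the Smirnov class recorded in the preliminary section, together with the elementary geometric observations above.
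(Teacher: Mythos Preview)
Your argument is correct and follows essentially the same route as the paper's: write the product as a double contour integral, insert the resolvent identity, and kill the two resulting terms using the vanishing Cauchy integral $\int_{\Gamma_k}\frac{h_k(z)}{z-w}\,\mathrm{d}z=0$ for $w\notin\operatorname{clos}\Omega_k$, together with the observation that $s_k$-almost every point of $\Gamma_k$ lies outside $\operatorname{clos}\Omega_l$ for $l\neq k$. The only cosmetic difference is that you deduce $A_1A_2=A_2A_1$ from $A_k\in\{T\}''$, whereas the paper simply remarks that the second equality is proved in the same way as the first.
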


\begin{proof} The inclusions $A_k\in\{T\}''$, $k=1,2$, follow from Lemma \ref{lem0}. We prove the equality
$A_1A_2=\mathbb O$, the equality $A_2A_1=\mathbb O$ is proved similarly. We have  
\begin{align*} (2\pi\mathrm{i})^2 A_1A_2&=\int_{\Gamma_1} h_1(z)(zI-T)^{-1}\mathrm{d}z
\int_{\Gamma_2}h_2(w)(wI-T)^{-1}\mathrm{d}w\\&=
\int_{\Gamma_1} \int_{\Gamma_2}h_1(z) h_2(w)
\frac{1}{z-w}((wI-T)^{-1}-(zI-T)^{-1})\mathrm{d}w\mathrm{d}z\\&=
\int_{\Gamma_2} h_2(w)(wI-T)^{-1}\int_{\Gamma_1} 
\frac{h_1(z)}{z-w}\mathrm{d}z\mathrm{d}w\\&\quad 
-\int_{\Gamma_1} h_1(z)(zI-T)^{-1}\int_{\Gamma_2} 
\frac{h_2(w)}{z-w}\mathrm{d}w\mathrm{d}z.
\end{align*}
If $w\not \in \operatorname {clos}\Omega_1$, then $\int_{\Gamma_1} \frac{h_1(z)}{z-w}\mathrm{d}z=0$. Since 
$s_2(\{w\in\Gamma_2\ : w\in \operatorname {clos}\Omega_1\})=0$, 
we conclude that 
\begin{equation*}\int_{\Gamma_2} h_2(w)(wI-T)^{-1}\int_{\Gamma_1} 
\frac{h_1(z)}{z-w}\mathrm{d}z\mathrm{d}w=0.
\end{equation*}
The equality \begin{equation*}\int_{\Gamma_1} h_1(z)(zI-T)^{-1}\int_{\Gamma_2} 
\frac{h_2(w)}{z-w}\mathrm{d}w\mathrm{d}z=0
\end{equation*}
follows from the similar reasoning.
\end{proof}

\section{Result}

In this section the following theorem is proved.

\begin{theorem}\label{thmbeta} Let $R>1$,  $\beta_1,\beta_2>0$, %$\beta_1+\beta_2\leq\pi$, 
 $\zeta_1,\zeta_2\in\mathbb T$, and let $T\in\mathcal L(\mathcal H)$. Set   
 \begin{equation*}
\Omega_{0k}=\{r\zeta_k\mathrm{e}^{\mathrm{i}t}\ :\ 0<r<R,\ |t|<\beta_k\}, \ \ k=1,2.
\end{equation*}
Suppose that $\Omega_{01}\cap\Omega_{02}=\emptyset$, $\sigma(T)=\{0\}$, and there exist $\beta>\max(\beta_1,\beta_2)$, $C_0$, $c_0>0$ 
such that \begin{equation*}%\|(r\zeta_k\mathrm{e}^{\pm\mathrm{i}\beta_k}I-T)^{-1}\|\leq C_0\exp(\frac{c_0}{r^{\frac{\pi}{2\beta}}}) \text{ for }
\|(r\zeta_k\mathrm{e}^{\pm\mathrm{i}\beta_k}I-T)^{-1}\|\leq C_0\exp({c_0}/{r^{\frac{\pi}{2\beta}}}) \text{ for } 0<r<R, \ \ k=1,2,
\end{equation*}
and for every $c>0$ 
\begin{equation}\label{estinfty}%\sup_{z\in\Omega_{0k}}\exp(-\frac{c}{|z|^{\frac{\pi}{2\beta}}})\|(zI-T)^{-1}\|=\infty, \ \ k=1,2.
\sup_{z\in\Omega_{0k}}\exp(-{c}/{|z|^{\frac{\pi}{2\beta}}})\|(zI-T)^{-1}\|=\infty, \ \ k=1,2.
\end{equation}
Then there exist  $A_1$, $A_2\in\{T\}''$ such that $A_1A_2=A_2A_1=\mathbb O$ and $A_k^n\neq\mathbb O$ for every $n\in\mathbb N$,  $k=1,2$.
\end{theorem}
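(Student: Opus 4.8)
The plan is to produce the required $A_1,A_2\in\{T\}''$ as resolvent integrals weighted by suitable rapidly decaying analytic functions, thereby verifying the hypotheses of Lemma \ref{leminv} in the strengthened form asked for. Put $p=\pi/(2\beta)$. Since $\beta_1,\beta_2<\beta$ and (by disjointness of $\Omega_{01},\Omega_{02}$) also $\beta_1,\beta_2<\pi$, we have $\beta_k<\min(\beta,\pi)$; fix $\beta_k'\in(\beta_k,\min(\beta,\pi))$ and $R'>R$, set $\mathcal U_k=\{r\zeta_k\mathrm e^{\mathrm it}:0<r<R',\ |t|<\beta_k'\}$, and note $(\operatorname{clos}\Omega_{0k})\setminus\{0\}\subset\mathcal U_k$ and $\sigma(T)\cap\mathcal U_k=\emptyset$. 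Put $\delta_k=\cos(\beta_k p)\in(0,1)$, fix $c_1\ge c_0/\delta_k$ and $C_1>C_0$, and define $h_k(z)=\exp(-c_1(\zeta_k/z)^{p})$ on $\mathcal U_k$ (principal branch, legitimate since $\zeta_k/z\notin(-\infty,0]$ there). Writing $z=r\zeta_k\mathrm e^{\mathrm it}$ gives $|h_k(z)|=\exp(-c_1r^{-p}\cos(tp))$, so $h_k\in H^\infty(\mathcal U_k)$ with $|h_k|\le1$, and for $z\in(\operatorname{clos}\Omega_{0k})\setminus\{0\}$ we have $\exp(-c_1/|z|^{p})\le|h_k(z)|\le\exp(-c_1\delta_k/|z|^{p})$ because $\delta_k\le\cos(tp)\le1$ for $|t|\le\beta_k$. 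Combined with the hypothesis on the rays $|t|=\beta_k$ this gives $|h_k(z)|\,\|(zI-T)^{-1}\|\le C_0$ there: the weight absorbs exactly the prescribed growth of the resolvent, and this is where $\beta>\max(\beta_1,\beta_2)$ enters.

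Next I would fatten the four rays $\ell_k^{\pm}=\{r\zeta_k\mathrm e^{\pm\mathrm i\beta_k}:0<r\le R\}$. As in the proof of Lemma \ref{lemgg} (the resolvent being norm-continuous on $\mathbb C\setminus\{0\}=\mathbb C\setminus\sigma(T)$, and $r\mapsto\exp(c_0/r^{p})$ continuous), around each point of $\ell_k^{\pm}\setminus\{0\}$ there is an open ball on which $\|(zI-T)^{-1}\|\le C_1\exp(c_0/|z|^{p})$; let $N_k^{\pm}$ be the union of these balls, an open neighbourhood of $\ell_k^{\pm}\setminus\{0\}$ pinching to $0$. Inside these tubes I would route the contours: take $\Gamma_k$ to be the positively oriented rectifiable Jordan curve from $0$ to $0$ running out along a Lipschitz graph over $\ell_k^-$, perturbed into $\Omega_{0k}$ by $\tfrac14\operatorname{dist}(\,\cdot\,,\mathbb C\setminus N_k^-)$ (capped so as to stay in the sector), then along a circular arc of radius $R-\eta$, then back along the analogous graph over $\ell_k^+$. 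Since the perturbation is a fixed fraction of the distance to the complement of $N_k^{\pm}$, these graphs lie in $N_k^{\pm}$, are chord--arc, and meet $\ell_k^{\pm}$ only at $0$ (a corner of opening $<2\pi$ there), so $\Gamma_k$ is a Lavrentiev curve, the bounded component $\Omega_k$ of $\mathbb C\setminus\Gamma_k$ is a Smirnov domain with $\Omega_k\subset\Omega_{0k}$, and $\Gamma_1\cap\Gamma_2\subset(\Omega_{01}\cup\{0\})\cap(\Omega_{02}\cup\{0\})=\{0\}$. In the same way I would fix a slightly larger auxiliary curve $\Gamma_{0,k}$ (perturbation $\tfrac18\operatorname{dist}$, arc radius $R-\eta/2$) so that $(\operatorname{clos}\Omega_k)\setminus\{0\}\subset\Omega_{0,k}$, $(\operatorname{clos}\Omega_{0,k})\setminus\{0\}\subset\mathcal U_k$, the collar between $\Gamma_k$ and $\Gamma_{0,k}$ lies in $N_k^+\cup N_k^-\cup\{R-\eta<|z|<R\}$, and $\Gamma_k,\Gamma_{0,k}$ are joined near $0$ by pairwise disjoint arcs of lengths tending to $0$; then $\Gamma_{0,k},\Omega_{0,k},\mathcal U_k$ satisfy the assumptions of Lemma \ref{lemgamma0} relative to $\Gamma_k$.

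With $A_k=\frac1{2\pi\mathrm i}\int_{\Gamma_k}h_k(z)(zI-T)^{-1}\,\mathrm dz$, the estimates above (together with the $N_k^{\pm}$ bound, the remaining parts of the curves being bounded away from $0$) give $|h_k(z)|\,\|(zI-T)^{-1}\|\le C_1$ on $\Gamma_k$, on the collar, and on $\Gamma_{0,k}$, while $s(\sigma(T)\cap\Gamma_k)=0$. Hence Lemma \ref{lem0} yields $A_k\in\{T\}''$, Lemma \ref{lemomega12} yields $A_1A_2=A_2A_1=\mathbb O$, and Lemma \ref{lemgamma1} (applied with $h=h_k$, $g_1=h_k^{\,i}$, $g_2=h_k^{\,j}$, $\Gamma=\Gamma_k$, $\Gamma_0=\Gamma_{0,k}$, $\Omega_0=\Omega_{0,k}$, $\mathcal U=\mathcal U_k$) shows that the operators $B_m=\frac1{2\pi\mathrm i}\int_{\Gamma_k}h_k^{\,m}(z)(zI-T)^{-1}\,\mathrm dz$ obey $B_mB_l=B_{m+l}$, with $B_1=A_k$; thus $A_k^{\,n}=B_n=\frac1{2\pi\mathrm i}\int_{\Gamma_k}h_k^{\,n}(z)(zI-T)^{-1}\,\mathrm dz$ for every $n\in\mathbb N$.

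It then remains to see $A_k^{\,n}\ne\mathbb O$. Suppose not. By Lemma \ref{lemkeraa} (applied with $f=h_k^{\,n}$; this is where the Smirnov property of $\Omega_k$ is used), $\mathcal H=\ker A_k^{\,n}=\{x:\sup_{z\in\Omega_k}|h_k(z)|^{n}\|(zI-T)^{-1}x\|<\infty\}$, so the uniform boundedness principle gives $\sup_{z\in\Omega_k}|h_k(z)|^{n}\|(zI-T)^{-1}\|<\infty$, hence, since $|h_k(z)|^{n}\ge\exp(-nc_1/|z|^{p})$ on $\Omega_k$, also $\sup_{z\in\Omega_k}\exp(-nc_1/|z|^{p})\|(zI-T)^{-1}\|<\infty$. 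But $\sup_{z\in\Omega_{0k}}\exp(-nc_1/|z|^{p})\|(zI-T)^{-1}\|=\infty$ by \eqref{estinfty}, whereas on $\Omega_{0k}\setminus\Omega_k$ --- covered by the two crescents between the graphs and $\ell_k^{\pm}$ (contained in $N_k^{\pm}$, where the quantity is $\le C_1\exp((c_0-nc_1)/|z|^{p})\le C_1$ because $nc_1\ge c_1\ge c_0/\delta_k>c_0$) together with $\{R-\eta<|z|<R\}$ (where it is bounded, $\sigma(T)=\{0\}$) --- that quantity is bounded; so the supremum over $\Omega_{0k}$ is already realized over $\Omega_k$, a contradiction. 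I expect this last step to be the main point: \eqref{estinfty} only locates the anomalous growth of the resolvent somewhere in the \emph{large} sector $\Omega_{0k}$, while the contour $\Gamma_k$ is forced down onto the two bounding rays near the origin, so $\Omega_k$ is a genuinely smaller, cusp-shaped domain; the construction must keep $\Omega_{0k}\setminus\Omega_k$ inside the tame tubes $N_k^{\pm}$ (and a set bounded away from $0$), so that the anomalous growth cannot escape $\Omega_k$. The remaining items --- rectifiability and the chord--arc property of $\Gamma_k$, the construction and bookkeeping of $\Gamma_{0,k}$ and the joining arcs, and the hypothesis checks for Lemmas \ref{lem0}, \ref{lemkeraa}, \ref{lemgamma0}, \ref{lemgamma1} and \ref{lemomega12} --- are routine given the explicit construction.
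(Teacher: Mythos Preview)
Your proposal is correct and follows essentially the same approach as the paper: define the weight $h_k(z)=\exp(-c_k/(\overline{\zeta_k}z)^{\pi/(2\beta)})$ with $c_k=c_0/\cos(\beta_k\pi/(2\beta))$ so that it absorbs the prescribed resolvent growth on the rays, build a Smirnov (chord--arc) subdomain $\Omega_k\subset\Omega_{0k}$ pinching to $0$ along the rays on whose boundary $|h_k|\,\|(zI-T)^{-1}\|$ is bounded, set $A_k=\frac{1}{2\pi\mathrm i}\int_{\Gamma_k}h_k(z)(zI-T)^{-1}\,\mathrm dz$, and then apply Lemmas \ref{lemomega12}, \ref{lemgamma1}, \ref{lemkeraa} together with the Banach--Steinhaus contradiction against \eqref{estinfty}. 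The only cosmetic differences are that the paper realizes $\Omega_k$ concretely as $(\Omega_{0k}\cap\mathbb D)\setminus\mathcal G_k$ using the staircase sets $\mathcal G_{k\pm}$ produced by Lemma \ref{lemgg} (rather than your Lipschitz-graph tubes $N_k^{\pm}$), and uses the full sector $\Omega_{0k}$ itself as the auxiliary outer domain in Lemma \ref{lemgamma1} (rather than your intermediate $\Omega_{0,k}$); these choices make the chord--arc property and the joining arcs $\gamma_{ln}$ explicit and spare you the separate construction of $\Gamma_{0,k}$.
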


\begin{proof} Take $0<\alpha<\min(\beta_1,\beta_2,\pi/2)$ and $C_1>C_0$. 
Apply Lemma \ref{lemgg} to the segments $\zeta_k\mathrm{e}^{\pm\mathrm{i}\beta_k}\cdot(0,1]=\{r\zeta_k\mathrm{e}^{\pm\mathrm{i}\beta_k}\ :\ r\in(0,1]\}$ 
and the function $\varphi(r)=\exp({c_0}/{r^{\frac{\pi}{2\beta}}})$, $r\in(0,1]$. 
Denote obtained sets by $\mathcal G_{k\pm}$, $k=1,2$.  
(More precisely, Lemma \ref{lemgg} is applied to the operator  $\overline{\zeta_k}\mathrm{e}^{\mp\mathrm{i}\beta_k}T$ and the segment $(0,1]$, 
and obtained set  $\mathcal G$ is replaced by $\{\zeta_k\mathrm{e}^{\pm\mathrm{i}\beta_k}z\ : \ z\in\mathcal G\}$.)
Note that the segments  $\zeta_k\mathrm{e}^{\pm\mathrm{i}\beta_k}\cdot(0,1]$
% $\{r\zeta_k\mathrm{e}^{\pm\mathrm{i}\beta_k}\ :\ r\in(0,1]\}$
%$\{r\zeta_1\mathrm{e}^{\pm\mathrm{i}\beta_1}\ :\ r\in(0,1]\}$  $\{r\zeta_2\mathrm{e}^{\pm\mathrm{i}\beta_2}\ :\ r\in(0,1]\}$ 
may coincide for different  $k$  and appropriate signs 
``$+$"  and  ``$-$". Consequently, $\mathcal G_{k\pm}$ may coincide. For  example, it is possible that 
$\zeta_1\mathrm{e}^{-\mathrm{i}\beta_1 }=\zeta_2\mathrm{e}^{\mathrm{i}\beta_2}$. Then $\mathcal G_{1-}=\mathcal G_{2+}$. Also,  it is possible that 
$\zeta_1\mathrm{e}^{-\mathrm{i}\beta_1} =\zeta_2\mathrm{e}^{\mathrm{i}\beta_2}$ and 
$\zeta_1\mathrm{e}^{\mathrm{i}\beta_1} =\zeta_2\mathrm{e}^{-\mathrm{i}\beta_2}$. Then 
 $\mathcal G_{1-}=\mathcal G_{2+}$ and $\mathcal G_{1+}=\mathcal G_{2-}$.

  Depending of $\beta_k$ and $\zeta_k$, $k=1,2$, two, three, or four different sets
$\mathcal G_{k\pm}$ are obtained.  
Set \begin{equation*}\mathcal G_k=\mathcal G_{k+}\cup\mathcal G_{k-},  \ \ 
\Omega_k=(\Omega_{0k}\cap\mathbb D)\setminus\mathcal G_k 
\text{ and } \Gamma_k=\partial\Omega_k, \ \ k=1,2. 
\end{equation*}
Then $\Gamma_k$, $k=1,2$, are chord-arc rectifiable Jordan curves, that is, Lavrentiev curves, due to the construction of the sets $\mathcal G_{k\pm}$ and the choice of $\alpha$. (The definition and references are recalled in Sec. 2, and the details  left to the reader.) 
 Consequently,  $\Omega_k$, $k=1,2$, are Smirnov domains by   {\cite[Theorem 7.11]{pom92}}.

Set 
\begin{align*} c_k&={c_0}/{\cos\bigl(\frac{\pi}{2}\frac{\beta_k}{\beta}\bigr)} \text{ and } \\
%h_k(z)=\exp(-\frac{c_k}{(\overline{\zeta_k} z)^{\frac{\pi}{2\beta}}}
h_k(z)&=\exp(-{c_k}/{(\overline{\zeta_k} z)^{\frac{\pi}{2\beta}}})
\text{ for }z=r\zeta_k\mathrm{e}^{\mathrm{i}t}\ :\ r>0,\ |t|<\pi, \ \ k=1,2.
\end{align*}
Then $h_k\in H^\infty(\Omega_{0k})$, $k=1,2$, and 
\begin{equation}\label{omegatt}\sup_{z\in(\operatorname{clos}\Omega_{0k})\setminus(\{0\}\cup\Omega_k)}|h_k(z)|\|(zI-T)^{-1} \|<\infty, \ \ k=1,2.
\end{equation} 
Indeed, let $z= r\zeta_k\mathrm{e}^{\mathrm{i}t}$, $0<r<R$, $|t|<\beta_k$. 
Then \begin{equation*}%|h_k(z)|=\exp(-\frac{c_k}{r^{\frac{\pi}{2\beta}}\cos(\frac{\pi}{2}\frac{t}{\beta})})\leq
%\exp(-\frac{c_0}{r^{\frac{\pi}{2\beta}}}),\ \ k=1,2
|h_k(z)|=\exp\bigl(-{c_k}\cos\bigl(\frac{\pi}{2}\frac{t}{\beta}\bigr)/r^{\frac{\pi}{2\beta}}\bigr)
\leq \exp\bigl(-{c_0}/r^{\frac{\pi}{2\beta}}\bigr), 
\ \ k=1,2.
\end{equation*}
Furthermore, if $z=r\zeta_k\mathrm{e}^{\mathrm{i}t}\in\mathcal G_k$, then  
\begin{equation*}%|h_k(z)|\|(zI-T)^{-1} )\|\leq\exp(-\frac{c_0}{r^{\frac{\pi}{2\beta}}})C_1
%\exp(\frac{c_0}{r^{\frac{\pi}{2\beta}}})<C_1, \ \ k=1,2,
|h_k(z)|\|(zI-T)^{-1} \|\leq\exp(-{c_0}/{r^{\frac{\pi}{2\beta}}})C_1
\exp({c_0}/{r^{\frac{\pi}{2\beta}}})=C_1, \ \ k=1,2,
\end{equation*}
by Lemma \ref{lemgg}. 
Since $\sigma(T)=\{0\}$, we have \begin{equation*}
\sup_{z= r\zeta_k\mathrm{e}^{\mathrm{i}t}, \, 1\leq r\leq R,\,  |t|\leq \beta_k}|h_k(z)|\|(zI-T)^{-1} \|<\infty, \ \ k=1,2.
\end{equation*} 

Thus, $T$, $h_k$, $\Omega_k$, $k=1,2$, satisfy the assumptions of Lemma \ref{lemomega12}. 
Furthermore, $T$, $h_1$, $\Omega_1$, $\Omega_{01}$ 
satisfy the assumptions of Lemma \ref{lemgamma1} with $\lambda_0=0$, and $T$, $h_2$, $\Omega_2$, $\Omega_{02}$ 
satisfy the assumptions of Lemma \ref{lemgamma1} with $\lambda_0=0$, too.  The segments 
  $\{\zeta_k\mathrm{e}^{\pm\mathrm{i}\beta_k}\cdot\bigl(a_{n+1}\pm\mathrm{i}\cdot[0,\delta_n]\bigr)\}_{n=1}^\infty$ , where  $\{a_n\}_{n=1}^\infty$ and  $\{\delta_n\}_{n=1}^\infty$ are from the construction of the correspondent set  
 $\mathcal G$, can serve as  $\{\gamma_{ln}\}_{n=1}^\infty$, $l=0,1$, with appropriate choice of indices and sings. 

Set 
\begin{equation*} A_k=\frac{1}{2\pi\mathrm{i}}\int_{\Gamma_k}h_k(z)(zI-T)^{-1}\mathrm{d}z,\ \ k=1,2.
\end{equation*}
By Lemma \ref{lemomega12}, $A_k\in\{T\}''$, $k=1,2$, and $A_1A_2=A_2A_1=\mathbb O$. By  Lemma \ref{lemgamma1},  
\begin{equation*} A_k^n=\frac{1}{2\pi\mathrm{i}}\int_{\Gamma_k} h_k^n(z)(zI-T)^{-1}\mathrm{d}z,\ \ k=1,2, \text{ for every }n\in\mathbb N.
\end{equation*}
By Lemma \ref{lemkeraa},
\begin{equation*} \ker A_k^n=\{x\in\mathcal H\ : \ \sup_{z\in\Omega_k}|h_k^n(z)|\|(zI-T)^{-1}x\|<\infty\}.
\end{equation*}
Let $n\in\mathbb N$, and let $k=1$ or $k=2$. Assume that $A_k^n=\mathbb O$. Then  
\begin{equation*} \sup_{z\in\Omega_k}|h_k^n(z)|\|(zI-T)^{-1}x\|<\infty \text{ for every } x\in\mathcal H. 
\end{equation*}
By the Banach--Steinhause Theorem (applied to the family of operators $\{h_k^n(z)(zI-T)^{-1}\}_{z\in\Omega_k}$), 
 \begin{equation*} \sup_{z\in\Omega_k}|h_k^n(z)|\|(zI-T)^{-1}\|<\infty. 
\end{equation*}
From the latest estimate and \eqref{omegatt} we conclude that 
\begin{equation*}
C=\sup_{z\in\Omega_{0k}}|h_k^n(z)|\|(zI-T)^{-1} \|<\infty.
\end{equation*}
This contradicts to \eqref{estinfty}. Indeed, let $z= r\zeta_k\mathrm{e}^{\mathrm{i}t}$, $0<r<R$, $|t|<\beta_k$. 
Then 
\begin{equation*}\|(zI-T)^{-1} \|\leq\frac{C}{|h_k^n(z)|}=
%C\exp(n\frac{c_k}{r^{\frac{\pi}{2\beta}}\cos(\frac{\pi}{2}\frac{t}{\beta}}))
%\leq C\exp(n\frac{c_k}{r^{\frac{\pi}{2\beta}}}).
C\exp\bigl(n{c_k}\cos\bigl(\frac{\pi}{2}\frac{t}{\beta}\bigr)/r^{\frac{\pi}{2\beta}}\bigr)
\leq C\exp\bigl(n{c_k}/r^{\frac{\pi}{2\beta}}\bigr).
\end{equation*}
Consequently, \eqref{estinfty} is not fulfilled for $c=nc_k$, a contradiction. Thus, $A_k^n\neq\mathbb O$ for every $n\in\mathbb N$,  $k=1,2$.
\end{proof}

\begin{corollary} Let $T\in\mathcal L(\mathcal H)$ satisfy the assumptions of Theorem \ref{thmbeta}. 
Then  $\operatorname{Hlat}T$ is not totally ordered by inclusion. In particular, every operator from $\{T\}'$ is not unicellular. 
\end{corollary}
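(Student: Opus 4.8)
The plan is to combine Theorem~\ref{thmbeta} with Lemma~\ref{leminv}, so essentially no new work is required. First I would invoke Theorem~\ref{thmbeta}: since $T$ satisfies its hypotheses (namely $\sigma(T)=\{0\}$, the disjoint sectors $\Omega_{0k}$, the resolvent bound on the rays $r\zeta_k\mathrm{e}^{\pm\mathrm i\beta_k}$, and the sharpness condition \eqref{estinfty}), there exist $A_1,A_2\in\{T\}''$ with $A_1A_2=A_2A_1=\mathbb O$ and $A_k^n\neq\mathbb O$ for every $n\in\mathbb N$ and $k=1,2$. Taking $n=2$ in particular gives $A_k^2\neq\mathbb O$ for $k=1,2$.

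Next I would simply observe that the operators $A_1,A_2$ produced above satisfy verbatim the hypotheses of Lemma~\ref{leminv}: they lie in $\{T\}''$, their products in both orders vanish, and their squares are nonzero. Applying Lemma~\ref{leminv} then yields at once that $\operatorname{Hlat}T$ is not totally ordered by inclusion, and that every operator in $\{T\}'$ is not unicellular. For the latter ``in particular'' clause one may recall, as in the proof of Lemma~\ref{leminv}, that $\operatorname{Hlat}T\subset\operatorname{Lat}A$ for every $A\in\{T\}'$, so the incomparable pairs of subspaces $\mathcal M_k,\mathcal N_k$ witnessing non-total-order of $\operatorname{Hlat}T$ already lie in $\operatorname{Lat}A$; hence $\operatorname{Lat}A$ is not totally ordered either.

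The main obstacle is, honestly, nonexistent at this stage: the corollary is an immediate specialization, with all the genuine content residing in Theorem~\ref{thmbeta} (the construction of the Lavrentiev contours $\Gamma_k=\partial\Omega_k$, the exponential weights $h_k$, and the verification via Lemmas~\ref{lem0}, \ref{lemkeraa}, \ref{lemgamma1}, \ref{lemomega12} together with the Banach--Steinhaus argument ruling out $A_k^n=\mathbb O$) and in the lattice-theoretic reasoning of Lemma~\ref{leminv}. If desired, one could also note the sharper consequence that $\ker A_k$ and $\operatorname{clos}A_k\mathcal H$ provide explicit nontrivial hyperinvariant subspaces of $T$, so that the hyperinvariant subspace problem has a positive answer for such $T$.
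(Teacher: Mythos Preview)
Your proposal is correct and follows exactly the paper's own proof, which consists of a single sentence: the conclusion follows from Theorem~\ref{thmbeta} and Lemma~\ref{leminv}. Your additional remarks (specializing to $n=2$, and the explanation of why $\operatorname{Hlat}T\subset\operatorname{Lat}A$ yields non-unicellularity) are accurate but already absorbed into the statement and proof of Lemma~\ref{leminv}.
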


\begin{proof} The conclusion of  the corollary follows from Theorem \ref{thmbeta} and Lemma \ref{leminv}.
\end{proof}

\subsection*{Comparison with results from \cite{sol}}

One of results from \cite{sol} is as follows. 

\begin{theoremcite}\label{thmsol}
 Let $N\in\mathbb N$,  $0<\beta\leq\pi$,  
 $\zeta_0\in\mathbb T$, and let $T\in\mathcal L(\mathcal H)$. 
Suppose that $\sigma(T)=\{0\}$, and there exist $K$,  $C_0$, $c_0>0$ 
such that \begin{equation}\label{solbeta}
\|(r\zeta_0\mathrm{e}^{\mathrm{i}t}I-T)^{-1}\|\leq C_0\exp({c_0}/{r^{\frac{\pi}{2\beta}}}) \text{ for } 0<r<1, \ |t|<\beta,
\end{equation}
and 
\begin{equation}\label{solnn}
\|(r\zeta_0\mathrm{e}^{\mathrm{i}t}I-T)^{-1}\|\leq{K}/{r^N} \text{ for } 0<r<1, \ \beta\leq|t|\leq\pi.
\end{equation}
Then $T$ has a nontrivial hyperinvariant subspace.
\end{theoremcite}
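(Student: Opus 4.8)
The plan is to argue along the lines of the proof of Theorem~\ref{thmbeta} and to conclude by Lemma~\ref{leminv}, the new feature being that here the single hypothesis has to play the role of both the resolvent estimate \eqref{estgg} \emph{and} (a substitute for) the non-boundedness condition \eqref{estinfty}. First I would dispose of a trivial case: if $\|(zI-T)^{-1}\|=O(|z|^{-m})$ as $z\to0$ for some $m$, then $0$ is a pole of the resolvent, so $T$ is nilpotent and a nontrivial hyperinvariant subspace is immediate (e.g.\ $\ker T$, if $T\neq\mathbb O$). So assume $T$ is not nilpotent; then $\sup_{0<|z|<1}|z|^m\|(zI-T)^{-1}\|=\infty$ for every $m$, and by \eqref{solnn} this super-polynomial growth is confined to the sector $S=\{r\zeta_0e^{\mathrm{i}t}:0<r<1,\ |t|<\beta\}$, i.e.\ $\sup_{z\in S}|z|^m\|(zI-T)^{-1}\|=\infty$ for every $m$.

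Call a direction $\theta\in[-\beta,\beta]$ \emph{tame} if the resolvent is polynomially bounded on some sector $\{r\zeta_0e^{\mathrm{i}t}:|t-\theta|<\varepsilon,\ 0<r<1\}$, and \emph{wild} otherwise; let $W$ be the (closed) set of wild directions. The analytic heart of the matter is a Phragm\'en--Lindel\"of argument in sectors of half-angle $\delta<\beta$: there the exponent $\pi/(2\beta)$ furnished by \eqref{solbeta} is \emph{sub}-critical, so a polynomial bound for $\|(zI-T)^{-1}\|$ on the two bounding rays of such a sector propagates (against the growth allowed by \eqref{solbeta}) to the whole sector. Using this, one checks that $W$ cannot be empty or a single point: otherwise, covering $[-\beta,\beta]$ away from that point by finitely many tame sectors and adding \eqref{solnn}, one would obtain a global polynomial bound for the resolvent near $0$, contradicting non-nilpotency. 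Hence $W$ contains at least two points.

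Then I would pick two suitable wild directions, take around them two narrow sectors $\Omega_1,\Omega_2\subset S$ with $\Omega_1\cap\Omega_2=\emptyset$ and $s(\Gamma_1\cap\Gamma_2)=0$ whose bounding rays lie in the tame part (deleting thin ``combs'' as in Lemma~\ref{lemgg} and in the proof of Theorem~\ref{thmbeta} if needed, so that $\Gamma_k=\partial\Omega_k$ is a Lavrentiev curve, $\Omega_k$ a Smirnov domain, and \eqref{hhcc} holds), fix an integer $M$ dominating the polynomial orders that occur on those rays, and put $A_k=\frac{1}{2\pi\mathrm{i}}\int_{\Gamma_k}z^M(zI-T)^{-1}\,\mathrm{d}z$. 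Then $A_k\in\{T\}''$ by Lemma~\ref{lem0}; $A_1A_2=A_2A_1=\mathbb O$ by Lemma~\ref{lemomega12}; $A_k^n=\frac{1}{2\pi\mathrm{i}}\int_{\Gamma_k}z^{Mn}(zI-T)^{-1}\,\mathrm{d}z$ by Lemma~\ref{lemgamma1}; and by Lemma~\ref{lemkeraa} together with the Banach--Steinhaus theorem, $A_k^n=\mathbb O$ would force $\|(zI-T)^{-1}\|\le C|z|^{-Mn}$ on $\Omega_k$, which is impossible because $\Omega_k$ contains a wild direction. So $A_k^n\neq\mathbb O$ for every $n$, and Lemma~\ref{leminv} yields that $\operatorname{Hlat}T$ is not totally ordered, a fortiori nontrivial.

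The step I expect to be the real obstacle is the production of the \emph{two disjoint} sectors $\Omega_1,\Omega_2$: the exponent $\pi/(2\beta)$ is exactly critical for $S$ itself, so Phragm\'en--Lindel\"of gives nothing on $S$, and $W$ can conceivably be a single interval. In that case every sector with tame bounding rays that meets $W$ must contain all of $W$; the complementary region is then tame, and the construction above collapses to $A=T^M$, which is useless. Covering this remaining situation appears to require a genuinely different and more delicate argument — presumably one exploiting the full strength of the critical bound \eqref{solbeta} through a quasianalyticity- or local-spectrum-type estimate — and this, I take it, is the actual content of the proof in \cite{sol}.
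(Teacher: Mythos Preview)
The paper does not prove Theorem~\ref{thmsol}: it is quoted from \cite{sol} as a previously known result, solely for the purpose of the comparison that follows it. There is therefore no ``paper's own proof'' to compare against; the proof lives in \cite{sol} and uses a functional calculus built directly from the exponential estimate \eqref{solbeta}, not the two-domain construction developed here.

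That said, your attempt to derive Theorem~\ref{thmsol} from the machinery of the present paper is instructive, and the obstacle you isolate is genuine rather than a technicality. Your Phragm\'en--Lindel\"of step is correct: on any sub-sector of half-angle $\delta<\beta$ the order $\pi/(2\beta)$ in \eqref{solbeta} is sub-critical, so polynomial bounds on the two bounding rays propagate inward, and this does force the wild set $W$ to contain more than one point. But nothing prevents $W$ from being a single closed interval, and in that case every sector with tame bounding rays that meets $W$ must contain all of $W$, so two disjoint such sectors do not exist and Lemmas~\ref{lemgamma1}--\ref{lemomega12} cannot be invoked. The paper's own remarks after Theorem~\ref{thmsol} confirm that this is not an accident: they describe precisely the complementary situation (where \eqref{solnn} holds but \eqref{solbeta} fails) and note that even then Theorem~\ref{thmbeta} applies only when the super-exponential growth occurs on \emph{both} sides of an interior ray, while the one-sided case is covered by neither result. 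In short, Theorem~\ref{thmsol} and Theorem~\ref{thmbeta} are genuinely independent, and the ``single-interval'' case you flag is exactly where the argument of \cite{sol}---rather than anything in this paper---is needed.
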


Suppose that \eqref{solnn} if fulfilled for some $T\in\mathcal L(\mathcal H)$ with $\sigma(T)=\{0\}$, but  
there is no  $C_0$, $c_0>0$ such that \eqref{solbeta} is  fulfilled. Then \eqref{estinfty} is fulfilled 
for $\Omega=\{\zeta_0\mathrm{e}^{\mathrm{i}t}\ :\ 0<r<1,\ |t|<\beta\}$. In addition, suppose that 
there exists $t_0$ such that $|t_0|<\beta$
and 
 \begin{equation*}
\|(r\zeta_0\mathrm{e}^{\mathrm{i}t_0}I-T)^{-1}\|\leq C_0\exp({c_0}/{r^{\frac{\pi}{2\beta}}}) \text{ for } 0<r<1. 
\end{equation*}
Set 
 \begin{align*}
\Omega_{01}&=\{\zeta_0 r\mathrm{e}^{\mathrm{i}t}\ :\ 0<r<R,\ -\beta<t<t_0\}  \\ \text{ and }
 \Omega_{02}&=\{\zeta_0 r\mathrm{e}^{\mathrm{i}t}\ :\ 0<r<R,\ t_0<t<\beta\}
\end{align*}
for some $R>1$. 
Since \eqref{estinfty} is fulfilled for $\Omega$, we have \eqref{estinfty} is fulfilled for at least one of two domains $\Omega_{0k}$,  $k=1,2$.  
If  \eqref{estinfty} is fulfilled for \emph{both} $\Omega_{01}$ and  $\Omega_{02}$, then  $T$ has a nontrivial hyperinvariant subspace by 
Theorem \ref{thmbeta}.  But  if  \eqref{estinfty} is fulfilled for \emph{only one} of $\Omega_{0k}$,  $k=1,2$, 
then  Theorem  \ref{thmsol} and  Theorem \ref{thmbeta} cannot be applied.

 If $T$ is a power bounded operator, that is,  $K_0=\sup_{n\in\mathbb N}\|T^n\|<\infty$, then 
$\|(zI-T)^{-1}\|\leq{K_0}/(|z|-1)$ for $|z|>1$. Consequently, 
\eqref{solnn} is fulfilled for $T-I$ with $N=2$, $\beta=\pi/2$ and $\zeta_0=-1$.
Indeed, set $w=z-1$. If  $\operatorname{Re}w>0$ and $|w|<1$, then $|w+1|-1\geq|w|^2/3$. Consequently, 
 \begin{equation*}\|(wI-(T-I))^{-1}\|\leq{K_0}/(|w+1|-1)\leq 3K_0/|w|^2.\end{equation*} 
If $\sigma(T)=\{1\}$, then $\sigma(T-I)=\{0\}$.  Theorem  \ref{thmsol} and  Theorem \ref{thmbeta} give sufficient conditions for $T$ have nontrivial hyperinvariant subspaces.

\end{document}